\title{%
    Counting numbers that are divisible by\\ the product of their digits
}
\author[1]{Qizheng He}
\author[2]{Carlo Sanna\footnote{C.~Sanna is a member of GNSAGA of INdAM and of CrypTO, the group of Cryptography and Number~Theory of the Politecnico di Torino.}}
\affil[1]{%
    \texttt{hqztrue@sina.com}
    \vspace{5pt}
}%
\affil[2]{%
    Department of Mathematical Sciences, Politecnico di Torino\protect\\
    Corso Duca degli Abruzzi 24, 10129 Torino, Italy\protect\\
    \texttt{carlo.sanna@polito.it}
}
\date{} 
\setlist[enumerate]{label=(\roman*),labelindent=1em,itemsep=0.5em,topsep=0.5em}
\newtheorem{theorem}{Theorem}[section]
\newtheorem{lemma}[theorem]{Lemma}
\theoremstyle{remark}
\newtheorem{remark}{Remark}[section]
\newcommand{\high}{\textsf{hi}}
\newcommand{\mi}{\textsf{mi}}
\newcommand{\low}{\textsf{lo}}
\begin{document}

\maketitle

\begin{abstract}
    Let $b \geq 3$ be a positive integer.
    A natural number is said to be a \emph{base-$b$ Zuckerman number} if it is divisible by the product of its base-$b$ digits.
    Let $\mathcal{Z}_b(x)$ be the set of base-$b$ Zuckerman numbers that do not exceed $x$, and assume that $x \to +\infty$.

    First, we prove an upper bound of the form $|\mathcal{Z}_b(x)| < x^{z_b^{+} + o(1)}$, where $z_b^{+} \in (0,1)$ is an effectively computable constant.
    In particular, we have that $z_{10}^+ = 0.665{\scriptstyle\ldots}$, which improves upon the previous upper bound $|\mathcal{Z}_{10}(x)| < x^{0.717}$ due to Sanna.
    Moreover, we prove that $|\mathcal{Z}_{10}(x)| > x^{0.204}$, which improves upon the previous lower bound $|\mathcal{Z}_{10}(x)| > x^{0.122}$, due to De~Koninck and Luca.

    Second, we provide a heuristic suggesting that $|\mathcal{Z}_b(x)| = x^{z_b + o(1)}$, where $z_b \in (0,1)$ is an effectively computable constant.
    In particular, we have that $z_{10} = 0.419{\scriptstyle\ldots}$.

    Third, we provide algorithms to count, respectively enumerate, the elements of $\mathcal{Z}_b(x)$, and we determine their complexities.
    Implementing one of such counting algorithms, we computed $|\mathcal{Z}_b(x)|$ for $b=3,\dots,12$ and large values of $x$ (depending on $b$), and we showed that the results are consistent with our heuristic.
\end{abstract}

\textbf{Keywords:} Base-$b$ representation, digits, prodigious number, Zuckerman number.

\textbf{MSC 2020:} 11A63, 11N25, 11Y16, 11Y55.

\section{Introduction}

Let $b \geq 2$ be a positive integer.
Natural numbers that satisfy special arithmetic constraints in terms of their base-$b$ digits have been studied by several authors.

For instance, a natural number is said to be a \emph{base-$b$ Niven number} if it is divisible by the sum of its base-$b$ digits.
Cooper and Kennedy proved that the set of base-$10$ Niven numbers has natural density equal to zero~\cite{CK84}, provided a lower bound for its counting function~\cite{MR0809074}, and gave an asymptotic formula for the counting function of base-$10$ Niven numbers with a fixed sum of digits~\cite{MR0938592}.
Later, Vardi~\cite[Sec.~2.3]{MR1150054} gave stronger upper and lower bounds for the counting function of base-$10$ Niven numbers.
Then, De~Koninck, Doyon, and K\'atai~\cite{MR1957109}, and (independently) Mauduit, Pomerance, and S\'ark\"ozy~\cite{MR2166377}, proved that the number of base-$b$ Niven numbers not exceeding $x$ is asymptotic to $c_b x / \! \log x$, as $x \to +\infty$, where $c_b > 0$ is an explicit constant (see~\cite{MR2605530} for a generalization).
Furthermore, De~Koninck and Doyon~\cite{MR1988644} studied large gaps between base-$b$ Niven numbers, and De~Koninck, Doyon, and K\'{a}tai~\cite{MR2439527} provided an asymptotic formula for the number of $r$-tuples of consecutive base-$b$ Niven numbers not exceeding $x$.
Conditionally to Hooley's Riemann hypothesis, Sanna~\cite{MR4338466} proved that every sufficiently large positive integer is the sum of a bounded number (depending only on $b$) of base-$b$ Niven numbers.

A natural number is said to be a \emph{base-$b$ Zuckerman number} if it is divisible by the product of its base-$b$ digits.
(If one requires divisibility by the product of nonzero base-$b$ digits, then one gets the \emph{base-$b$ prodigious numbers}~\cite{MR4503211}.)
Note that the base-$2$ Zuckerman numbers are simply the \emph{Mersenne numbers} $2^k - 1$, where $k=1,2,\dots$, which are also called the \emph{base-$2$ repunits}.
Hence, hereafter, we assume that $b \geq 3$.
Zuckerman numbers are less studied than their additive counterpart of Niven numbers, but there are some results.
For all $x \geq 1$, let $\mathcal{Z}_b(x)$ be the sets of base-$b$ Zuckerman numbers not exceeding $x$.
De~Koninck and Luca~\cite{MR2298113,MR3734412} proved upper and lower bounds for the cardinality of $\mathcal{Z}_{10}(x)$.
Then Sanna~\cite{MR4181552} gave an upper bound for the cardinality of $\mathcal{Z}_b(x)$, which for $b = 10$ improves the result of De~Koninck and Luca.
More precisely, putting together the bounds of De~Koninck--Luca and Sanna, we have that
\begin{equation}\label{equ:previous-bounds}
    x^{0.122} < |\mathcal{Z}_{10}(x)| < x^{0.717}
\end{equation}
for all sufficiently large $x$.

Our contribution is the following.
Assume that $x \to +\infty$.
In Section~\ref{sec:upper-bound}, we prove an upper bound of the form $|\mathcal{Z}_b(x)| < x^{z_b^{+} + o(1)}$, where $z_b^{+} \in (0,1)$ is an effectively computable constant.
In particular, we have that $z_{10}^+ = 0.665{\scriptstyle\ldots}$, which improves upon the upper bound of~\eqref{equ:previous-bounds}.
In Section~\ref{sec:lower-bound-10}, we prove the lower bound $|\mathcal{Z}_b(x)| > x^{0.204}$, which improves upon the lower bound of~\eqref{equ:previous-bounds}.
In Section~\ref{sec:heuristic}, we provide a heuristic suggesting that $|\mathcal{Z}_b(x)| = x^{z_b + o(1)}$, where $z_b \in (0,1)$ is an effectively computable constant.
In Section~\ref{sec:algorithms}, we provide algorithms to count, respectively to enumerate, the elements of $\mathcal{Z}_b(x)$.
Our best (general) counting algorithm has complexity $x^{z_b^* + o(1)}$, where $z_b^* \in (0, 1)$ is an effectively computable constant.
We collected the values of $z_b$, $z_b^+$, $z_b^*$, for $b=3,\dots,12$, in Table~\ref{tab:exponents}.
For $b = 10$, we also provide a counting, respectively enumeration, algorithm with complexity $x^{0.3794}$, respectively $x^{0.3794} + |\mathcal{Z}_{10}(x)|$.
In particular, assuming the previous heuristic, this enumeration algorithm is (asymptotically) optimal.
Finally, Qizheng~He implemented one of the counting algorithms in \texttt{C++} (the implementation is freely available on GitHub~\cite{Repo}).
Using such implementation, we computed the number of base-$b$ Zuckerman numbers with exactly $N$ digits, for $b=3,\dots,12$ and large values of $N$ (about 80 hours on a consumer laptop).
The results are collected in Table~\ref{tab:data} and support our heuristic, see Table~\ref{tab:errors} (and Lemma~\ref{lem:from-N-to-x} for the justification of the comparison).
(The terms in Table~\ref{tab:data} with $b=10$ and $N \leq 16$ were already computed by Giovanni Resta, see his comment to sequence $\textsf{A007602}$ of OEIS~\cite{OEIS}.) 

\begin{table}[ht]
    \centering
    \adjustbox{max width=\textwidth}{
    \begin{tabular}{c|rrrrrrrrrr}
    \toprule
    $b$ & \multicolumn{1}{c}{$3$} & \multicolumn{1}{c}{$4$} & \multicolumn{1}{c}{$5$} & \multicolumn{1}{c}{$6$} & \multicolumn{1}{c}{$7$} & \multicolumn{1}{c}{$8$} & \multicolumn{1}{c}{$9$} & \multicolumn{1}{c}{$10$} & \multicolumn{1}{c}{$11$} & \multicolumn{1}{c}{$12$} \\
    $z_b$ & $.3690{\scriptstyle\ldots}$ & $.2075{\scriptstyle\ldots}$ & $.4560{\scriptstyle\ldots}$ & $.3727{\scriptstyle\ldots}$ & $.4604{\scriptstyle\ldots}$ & $.2483{\scriptstyle\ldots}$ & $.3625{\scriptstyle\ldots}$ & $.4197{\scriptstyle\ldots}$ & $.4481{\scriptstyle\ldots}$ & $.3537{\scriptstyle\ldots}$ \\
    $z_b^+$ & $.5257{\scriptstyle\ldots}$ & $.4024{\scriptstyle\ldots}$ & $.6634{\scriptstyle\ldots}$ & $.5948{\scriptstyle\ldots}$ & $.6885{\scriptstyle\ldots}$ & $.4988{\scriptstyle\ldots}$ & $.6081{\scriptstyle\ldots}$ & $.6657{\scriptstyle\ldots}$ & $.6977{\scriptstyle\ldots}$ & $.6130{\scriptstyle\ldots}$ \\
    $z_b^*$ & $.4318{\scriptstyle\ldots}$ & $.3018{\scriptstyle\ldots}$ & $.4361{\scriptstyle\ldots}$ & $.3866{\scriptstyle\ldots}$ & $.4559{\scriptstyle\ldots}$ & $.3304{\scriptstyle\ldots}$ & $.4017{\scriptstyle\ldots}$ & $.4416{\scriptstyle\ldots}$ & $.4653{\scriptstyle\ldots}$ & $.4068{\scriptstyle\ldots}$ \\
    \bottomrule
    \end{tabular}
    }
    \caption{Exponents $z_b$, $z_b^+$, $z_b^*$ for $b=3,\dots,12$.}
    \label{tab:exponents}
\end{table}
\vspace{-2em}
\section{Notation}

Throughout the rest of the paper, let $b \geq 3$ be a fixed integer.
For the sake of brevity, we say ``digits'', ``representation'', and ``Zuckerman numbers'' instead of ``base-$b$ digits'', ``base-$b$ representation'', and ``base-$b$ Zuckerman numbers'', respectively.
We employ the Landau--Bachmann ``Big Oh'' and ``little oh'' notations $O$ and $o$ with their usual meanings.
In particular, the implied constants in $O$, and how fast expressions like $o(1)$ go to zero, may depend on $b$.
We let $\mathbb{N} := \{0,1,\dots\}$ be the set of nonnegative integers, and we let $|\mathcal{S}|$ denote the cardinality of every finite set $\mathcal{S}$.
For every integer $n \geq 1$, let $\mathcal{D}_b(n)$ be the set of the digits of $n$, and let $p_b(n) := \prod_{d = 0}^{b - 1} d^{w_{b,d}(n)}$ be the product of the digits of $n$, where $w_{b,d}(n)$ denotes the number of times that the digit $d$ appears in the representation of $n$.
Let $\mathcal{Z}_b$ be the set of Zuckerman numbers, and let $\mathcal{Z}_{b,N}$ be the set of Zuckerman numbers with exactly $N$ digits.
\begin{table}[ht]
    \centering
    \begin{adjustbox}{max width=0.9\textwidth}
    \begin{tabular}{rrrrrrrrrrr}
    \toprule
    \diagbox{$N$}{$b$} & $3$ & $4$ & $5$ & $6$ & $7$ & $8$ & $9$ & $10$ & $11$ & $12$ \\
    \midrule
    $1$ & $2$ & $3$ & $4$ & $5$ & $6$ & $7$ & $8$ & $9$ & $10$ & $11$\\
    $2$ & $2$ & $2$ & $3$ & $4$ & $3$ & $4$ & $5$ & $5$ & $5$ & $6$\\
    $3$ & $4$ & $4$ & $14$ & $8$ & $23$ & $15$ & $18$ & $20$ & $33$ & $21$\\
    $4$ & $6$ & $7$ & $10$ & $20$ & $29$ & $9$ & $33$ & $40$ & $63$ & $43$\\
    $5$ & $9$ & $6$ & $42$ & $27$ & $96$ & $38$ & $107$ & $117$ & $224$ & $107$\\
    $6$ & $10$ & $8$ & $78$ & $55$ & $203$ & $49$ & $191$ & $285$ & $645$ & $222$\\
    $7$ & $14$ & $16$ & $184$ & $109$ & $533$ & $78$ & $518$ & $747$ & $2000$ & $544$\\
    $8$ & $33$ & $18$ & $385$ & $188$ & $1295$ & $163$ & $914$ & $1951$ & $5411$ & $1213$\\
    $9$ & $46$ & $22$ & $795$ & $364$ & $3299$ & $294$ & $1959$ & $5229$ & $16532$ & $2718$\\
    $10$ & $43$ & $36$ & $1570$ & $653$ & $7630$ & $376$ & $4903$ & $13493$ & $45464$ & $6267$\\
    $11$ & $72$ & $38$ & $3208$ & $1095$ & $19130$ & $631$ & $11129$ & $35009$ & $135967$ & $13738$\\
    $12$ & $171$ & $53$ & $6411$ & $2076$ & $43687$ & $1246$ & $22161$ & $91792$ & $393596$ & $31483$\\
    $13$ & $211$ & $77$ & $13741$ & $3866$ & $111255$ & $1966$ & $50391$ & $239791$ & $1161371$ & $71482$\\
    $14$ & $252$ & $96$ & $29200$ & $7373$ & $276967$ & $3408$ & $116777$ & $628412$ & $3406099$ & $160109$\\
    $15$ & $428$ & $129$ & $60864$ & $14622$ & $690189$ & $6038$ & $261725$ & $1643144$ & $10012223$ & $366977$\\
    $16$ & $728$ & $177$ & $126080$ & $27972$ & $1710625$ & $8291$ & $578324$ & $4314987$ & $29355933$ & $845908$\\
    $17$ & $986$ & $237$ & $263060$ & $53201$ & $4124693$ & $13470$ & $1276433$ & $11319722$ & $86022519$ &  \\
    $18$ & $1400$ & $317$ & $545025$ & $103132$ & $10097943$ & $28419$ & $2851060$ & $29713692$ & $251993074$ &  \\
    $19$ & $2214$ & $425$ & $1137646$ & $203051$ & $24765215$ & $46596$ & $6310957$ & $78042616$ & $737799286$ &  \\
    $20$ & $3450$ & $558$ & $2371769$ & $398775$ & $60708268$ & $71497$ & $13886129$ & $204939760$ &   &  \\
    $21$ & $5007$ & $772$ & $4946854$ & $774024$ & $148622249$ & $126490$ & $30753950$ & $538453205$ &   &  \\
    $22$ & $7370$ & $997$ & $10296601$ & $1506714$ & $364176274$ & $198722$ & $68293912$ & $1414773364$ &   &  \\
    $23$ & $11234$ & $1305$ & $21454503$ & $2915442$ & $894674969$ & $320763$ & $151573306$ &   &   &  \\
    $24$ & $16981$ & $1817$ & $44678532$ & $5658200$ & $2204890644$ & $603722$ &   &   &   &  \\
    $25$ & $25324$ & $2305$ & $93110027$ & $10999574$ & $5390633926$ & $1015093$ &   &   &   &  \\
    $26$ & $37716$ & $3096$ & $193971630$ & $21369791$ &   & $1585495$ &   &   &   &  \\
    $27$ & $56757$ & $4164$ & $404103162$ & $41626279$ &   & $2717026$ &   &   &   &  \\
    $28$ & $85493$ & $5495$ & $841843065$ & $81172184$ &   &   &   &   &   &  \\
    $29$ & $127774$ & $7402$ & $1753948967$ & $158009860$ &   &   &   &   &   &  \\
    $30$ & $191665$ & $9936$ & $3653927956$ & $307539610$ &   &   &   &   &   &  \\
    $31$ & $287481$ & $13013$ & $7612395846$ & $598683507$ &   &   &   &   &   &  \\
    $32$ & $431622$ & $17308$ &   &   &   &   &   &   &   &  \\
    $33$ & $646816$ & $23372$ &   &   &   &   &   &   &   &  \\
    $34$ & $970475$ & $31037$ &   &   &   &   &   &   &   &  \\
    $35$ & $1455724$ & $41399$ &   &   &   &   &   &   &   &  \\
    $36$ & $2183782$ & $55034$ &   &   &   &   &   &   &   &  \\
    $37$ & $3275092$ & $73086$ &   &   &   &   &   &   &   &  \\
    $38$ & $4914274$ & $98142$ &   &   &   &   &   &   &   &  \\
    $39$ & $7371941$ & $130591$ &   &   &   &   &   &   &   &  \\
    $40$ & $11057697$ & $173916$ &   &   &   &   &   &   &   &  \\
    $41$ & $16586242$ & $232253$ &   &   &   &   &   &   &   &  \\
    $42$ & $24880345$ & $309102$ &   &   &   &   &   &   &   &  \\
    $43$ & $37318948$ & $412940$ &   &   &   &   &   &   &   &  \\
    $44$ & $55979205$ & $549336$ &   &   &   &   &   &   &   &  \\
    $45$ & $83963507$ & $733783$ &   &   &   &   &   &   &   &  \\
    $46$ & $125950398$ & $978893$ &   &   &   &   &   &   &   &  \\
    $47$ & $188921345$ & $1305037$ &   &   &   &   &   &   &   &  \\
    $48$ & $283385733$ & $1738126$ &   &   &   &   &   &   &   &  \\
    $49$ & $425085179$ & $2319219$ &   &   &   &   &   &   &   &  \\
    $50$ & $637608602$ & $3091664$ &   &   &   &   &   &   &   &  \\
    $51$ & $956428288$ & $4119790$ &   &   &   &   &   &   &   &  \\
    $52$ & $1434628060$ &   &   &   &   &   &   &   &   &  \\
    $53$ & $2152013870$ &   &   &   &   &   &   &   &   &  \\
    $54$ & $3227959147$ &   &   &   &   &   &   &   &   &  \\
    $55$ & $4841970543$ &   &   &   &   &   &   &   &   &  \\
    $56$ & $7262855061$ &   &   &   &   &   &   &   &   &  \\
    $57$ & $10894279904$ &   &   &   &   &   &   &   &   &  \\
    $58$ & $16341567376$ &   &   &   &   &   &   &   &   &  \\
    $59$ & $24512322843$ &   &   &   &   &   &   &   &   &  \\
    \bottomrule
    \end{tabular}
    \end{adjustbox}
    \caption{The number of base-$b$ Zuckerman numbers with $N$ digits in base $b$.}
    \label{tab:data}
\end{table}

\begin{table}[h!]
    \centering
    \adjustbox{scale=0.8}{
    \begin{tabular}{rrrrrr}
    \toprule
    $b$ & $N$ & $|\mathcal{Z}_{b,N}|$ & $\widetilde{z}_{b,N}$ & $z_b$ & error \\
    \midrule
    $3$ & $59$ & $24512322843$ & $.36907025$ & $.36907024$ & $1.7{\scriptstyle\ldots} \mbox{\tiny$\times$} 10^{-8}$ \\
    $4$ & $51$ & $4119790$ & $.21543273$ & $.20751874$ & $3.8{\scriptstyle\ldots} \mbox{\tiny$\times$} 10^{-2}$ \\
    $5$ & $31$ & $7612395846$ & $.45604067$ & $.45604068$ & $2.8{\scriptstyle\ldots} \mbox{\tiny$\times$} 10^{-8}$ \\
    $6$ & $31$ & $598683507$ & $.36385650$ & $.37272266$ & $2.3{\scriptstyle\ldots} \mbox{\tiny$\times$} 10^{-2}$ \\
    $7$ & $25$ & $5390633926$ & $.46061589$ & $.46049815$ & $2.5{\scriptstyle\ldots} \mbox{\tiny$\times$} 10^{-4}$ \\
    $8$ & $27$ & $2717026$ & $.26387156$ & $.24839536$ & $6.2{\scriptstyle\ldots} \mbox{\tiny$\times$} 10^{-2}$ \\
    $9$ & $23$ & $151573306$ & $.37273465$ & $.36252164$ & $2.8{\scriptstyle\ldots} \mbox{\tiny$\times$} 10^{-2}$ \\
    $10$ & $22$ & $1414773364$ & $.41594031$ & $.41978534$ & $9.1{\scriptstyle\ldots} \mbox{\tiny$\times$} 10^{-3}$ \\
    $11$ & $19$ & $737799286$ & $.44818212$ & $.44816395$ & $4.0{\scriptstyle\ldots} \mbox{\tiny$\times$} 10^{-5}$ \\
    $12$ & $16$ & $845908$ & $.34327662$ & $.35378177$ & $2.9{\scriptstyle\ldots} \mbox{\tiny$\times$} 10^{-2}$ \\
\bottomrule
\end{tabular}
}
\caption{Here $\mathcal{Z}_{b,N}$ is the set of base-$b$ Zuckerman numbers with $N$ digits, $\widetilde{z}_{b,N} := \log\!|\mathcal{Z}_{b,N}| /\! \log(b^N)$, and the error is equal to $|\widetilde{z}_{b,N} - z_b| / z_b$.}
\label{tab:errors}
\end{table}
\FloatBarrier
\section{Preliminaries}

This section collects some preliminary lemmas needed in subsequent proofs.

\subsection{Zuckerman numbers}

The next lemma regards two simple, but useful, facts.

\begin{lemma}\label{lem:simple-facts}
	Let $n \in \mathcal{Z}_b$.
	Then
	\begin{enumerate}
		\item\label{ite:simple-facts:1} $0 \notin \mathcal{D}_b(n)$;
		\item\label{ite:simple-facts:2} $b$ does not divide $p_b(n)$.
	\end{enumerate}
\end{lemma}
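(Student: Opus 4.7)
The plan is to establish part (i) directly from the definition and then derive (ii) as a one-line consequence of (i) together with the Zuckerman property.

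For part (i), I would argue by contradiction. Suppose $0 \in \mathcal{D}_b(n)$. Then, by the definition of $p_b(n)$ as a product over all digit positions, we immediately get $p_b(n) = 0$. Since $n$ is a Zuckerman number, we have $p_b(n) \mid n$; but $0 \mid n$ is possible only when $n = 0$, whereas $n \in \mathcal{Z}_b$ has at least one digit and is therefore positive (in fact at least $1$). This contradiction forces every digit of $n$ to be nonzero.

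For part (ii), I would combine (i) with the Zuckerman divisibility relation. Assume, for the sake of contradiction, that $b \mid p_b(n)$. Since $p_b(n) \mid n$, transitivity of divisibility gives $b \mid n$. Writing $n = \sum_{i \geq 0} d_i\, b^i$ with $d_i \in \{0, 1, \dots, b-1\}$, we have $n \equiv d_0 \pmod{b}$, so $b \mid n$ is equivalent to $d_0 = 0$. This puts $0$ in $\mathcal{D}_b(n)$, contradicting part (i).

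There is essentially no obstacle here: both statements follow immediately from unwinding the definition of $p_b(n)$ and the Zuckerman divisibility property, with (ii) using (i) as a stepping stone. The only point to be careful about is the convention for $p_b(n)$ when a zero digit appears (it is taken to be $0$, not the empty product over nonzero digits, which is exactly what distinguishes Zuckerman numbers from the prodigious numbers mentioned in the introduction).
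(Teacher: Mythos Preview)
Your proof is correct and follows essentially the same approach as the paper: for (i) you use that $p_b(n)=0$ would force $n=0$, and for (ii) you use that $b\mid p_b(n)\mid n$ forces the least significant digit to be zero, contradicting (i). The only difference is cosmetic---you spell out the base-$b$ expansion explicitly where the paper simply says ``the least significant digit of $n$ is equal to zero.''
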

\begin{proof}
	Since $n \in \mathcal{Z}_b$, we have that $p_b(n)$ divides $n$.
	If $0 \in \mathcal{D}_b(n)$, then $p_b(n) = 0$, and so $n = 0$, which is impossible.
	Thus~\ref{ite:simple-facts:1} follows.
	If $b$ divides $p_b(n)$, then $b$ divides $n$, so that the least significant digit of $n$ is equal to zero, which is impossible by~\ref{ite:simple-facts:1}.
	Thus~\ref{ite:simple-facts:2} follows.
\end{proof}

We need to define some families of sets of digits and some related quantities.
Let $\Omega_b$ be the family of all subsets $\mathcal{D} \subseteq \{1, \dots, b-1\}$ such that $b$ does not divide $d_1 \cdots d_k$ for all $k \geq 1$ and $d_1, \dots, d_k \in \mathcal{D}$.
Moreover, let $\Omega_b^*$ be the family of all $\mathcal{D} \in \Omega_b$ such that there exists no $\mathcal{D}^\prime \in \Omega_b$ with $\mathcal{D} \subsetneq \mathcal{D}^\prime$.
(A more explicit description of $\Omega_b^*$ is given by Remark~\ref{rmk:Omega-b-star}.)
Note that every $\mathcal{D} \in \Omega_b^*$ contains each $d \in \{1, \dots, b - 1\}$ with $\gcd(b, d) = 1$.
In particular, since $b \geq 3$, we have that $|\mathcal{D}| \geq 2$.

For every $k$ pairwise distinct digits $d_1, \dots, d_k \in \{1, \dots, b-1\}$, if there exist integers $v_1, \dots, v_k \geq 1$ such that $b$ divides $d_1^{v_1} \cdots d_k^{v_k}$ then let $V_b(d_1, \dots, d_k)$ be the minimum possible value of $\max(v_1, \dots, v_k)$, otherwise let $V_b(d_1, \dots, d_k) := 0$.
Furthermore, let $W_b$ be the maximum of $V_b(d_1, \dots, d_k)$ as $d_1, \dots, d_k$ range over all the possible $k$ pairwise distinct digits in $\{1, \dots, b-1\}$.
For each $\mathcal{D} \in \Omega_b^*$, let $\mathcal{Z}_{b,\mathcal{D}}$ be the set of all $n \in \mathcal{Z}_b$ such that $d \in \mathcal{D}_b(n) \setminus \mathcal{D}$ implies $w_{b,d}(n) < W_b$.
Also, for every integer $N \geq 1$, put $\mathcal{Z}_{b,\mathcal{D},N} := \mathcal{Z}_{b,\mathcal{D}} \cap {[b^{N-1}, b^N)}$.

\begin{lemma}\label{lem:Zb-as-union-of-ZbD}
	We have that $\mathcal{Z}_b = \bigcup_{\mathcal{D} \in \Omega_b^*} \mathcal{Z}_{b,\mathcal{D}}$.
\end{lemma}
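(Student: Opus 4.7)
\medskip

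The plan is to prove the two inclusions separately. The reverse inclusion $\bigcup_{\mathcal{D} \in \Omega_b^*} \mathcal{Z}_{b,\mathcal{D}} \subseteq \mathcal{Z}_b$ is immediate from the definition of $\mathcal{Z}_{b,\mathcal{D}}$, so essentially everything lies in showing $\mathcal{Z}_b \subseteq \bigcup_{\mathcal{D} \in \Omega_b^*} \mathcal{Z}_{b,\mathcal{D}}$. Fix $n \in \mathcal{Z}_b$. By Lemma~\ref{lem:simple-facts}, $\mathcal{D}_b(n) \subseteq \{1,\dots,b-1\}$ and $b \nmid p_b(n)$. Define the set of ``heavy'' digits of $n$ by
\[
\mathcal{E} := \bigl\{ d \in \mathcal{D}_b(n) : w_{b,d}(n) \geq W_b \bigr\}.
\]
By construction, every digit $d \in \mathcal{D}_b(n) \setminus \mathcal{E}$ satisfies $w_{b,d}(n) < W_b$. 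Therefore, once we exhibit a $\mathcal{D} \in \Omega_b^*$ with $\mathcal{E} \subseteq \mathcal{D}$, it will follow that $n \in \mathcal{Z}_{b,\mathcal{D}}$.

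The key step is to verify that $\mathcal{E} \in \Omega_b$. I will do this by contradiction: suppose that $b$ divides some product $d_1 \cdots d_k$ with $d_1, \dots, d_k \in \mathcal{E}$. Collecting equal factors, we may write this product as $e_1^{v_1} \cdots e_m^{v_m}$ with the $e_i$ pairwise distinct elements of $\mathcal{E}$ and $v_i \geq 1$. Hence $V_b(e_1,\dots,e_m)$ is a positive integer, so we can choose exponents $v_1^\prime, \dots, v_m^\prime \geq 1$ with $b \mid e_1^{v_1^\prime} \cdots e_m^{v_m^\prime}$ and $\max_i v_i^\prime = V_b(e_1,\dots,e_m) \leq W_b$. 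But each $e_i$ lies in $\mathcal{E}$, so $w_{b,e_i}(n) \geq W_b \geq v_i^\prime$, which implies
\[
e_1^{v_1^\prime} \cdots e_m^{v_m^\prime} \;\Bigm|\; \prod_{i=1}^m e_i^{w_{b,e_i}(n)} \;\Bigm|\; p_b(n),
\]
and therefore $b \mid p_b(n)$, contradicting Lemma~\ref{lem:simple-facts}\ref{ite:simple-facts:2}. Thus $\mathcal{E} \in \Omega_b$.

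Once $\mathcal{E} \in \Omega_b$ is established, the finishing step is trivial: since $\Omega_b$ consists of subsets of the finite set $\{1,\dots,b-1\}$, any element of $\Omega_b$ sits inside some maximal one, so there exists $\mathcal{D} \in \Omega_b^*$ with $\mathcal{E} \subseteq \mathcal{D}$, giving $n \in \mathcal{Z}_{b,\mathcal{D}}$ as required. The only real content is the combinatorial argument of the previous paragraph; the main obstacle to watch for is making sure the definition of $W_b$ really is a uniform bound usable for \emph{every} subfamily $\{e_1,\dots,e_m\}$ of $\mathcal{E}$ simultaneously, which is precisely why $W_b$ is defined as a maximum over all tuples of pairwise distinct digits rather than fixed per tuple.
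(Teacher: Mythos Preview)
Your proof is correct. The core ingredients---Lemma~\ref{lem:simple-facts}\ref{ite:simple-facts:2}, the definition of $W_b$ as a uniform bound, and the passage from $\Omega_b$ to $\Omega_b^*$ by maximality---are exactly those the paper uses, but your organization is cleaner. The paper argues iteratively: it starts from $\mathcal{D}_b(n)$ and, whenever this set fails to lie in $\Omega_b$, locates a witnessing product $d_1^{v_1}\cdots d_k^{v_k}$ divisible by $b$, observes that not all of the $d_i$ can be heavy (else $b\mid p_b(n)$), removes one light digit, and repeats until the remaining set lands in $\Omega_b$. You instead go straight to the endpoint by defining $\mathcal{E}$ as the set of heavy digits and proving in one stroke that $\mathcal{E}\in\Omega_b$. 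This avoids the inductive bookkeeping entirely; the paper's iterative process, once unwound, is really just discovering that every digit it removes lies outside $\mathcal{E}$, so your argument is the same idea expressed more directly.
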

\begin{proof}
	Since $\mathcal{Z}_{b,\mathcal{D}} \subseteq \mathcal{Z}_b$ for every $\mathcal{D} \in \Omega_b^*$, it suffices to prove that $\mathcal{Z}_b \subseteq \bigcup_{\mathcal{D} \in \Omega_b^*} \mathcal{Z}_{b,\mathcal{D}}$.
	Let $n \in \mathcal{Z}_b$.
	We have to prove that there exists $\mathcal{D} \in \Omega_b^*$ such that for each $d \in \mathcal{D}_b(n)$ we have that either $d \in \mathcal{D}$ or $w_{b,d}(n) < W_b$.
	Note that it suffices to prove the existence of $\mathcal{D} \in \Omega_b$, since every set in $\Omega_b$ is a subset of a set in $\Omega_b^*$.
	
	If $\mathcal{D}_b(n) \in \Omega_b$ then the claim is obvious.
	Hence, assume that $\mathcal{D}_b(n) \notin \Omega_b$.
	Thus there exist $k$ pairwise distinct digits $d_1, \dots, d_k \in \mathcal{D}_b(n)$ and integers $v_1, \dots, v_k \geq 1$ such that $b$ divides $d_1^{v_1} \cdots d_k^{v_k}$.
	In particular, without loss of generality, we can assume that $\max(v_1, \dots, v_k) = V_b(d_1, \dots, d_k)$.
	Consequently, we have that $\max(v_1, \dots, v_k) \leq W_b$.
	
	If $w_{b,d_i}(n) \geq W_b$ for each $i \in \{1, \dots, k\}$, then we get that $b$ divides $d_1^{w_{b,d_1}(n)} \cdots d_k^{w_{b,d_k}(n)}$, which in turn divides $p_b(n)$.
	But by Lemma~\ref{lem:simple-facts}\ref{ite:simple-facts:2} this is impossible, since $n \in \mathcal{Z}_b$.
	
	Therefore, there exists $i_1 \in \{1, \dots, k\}$ such that $w_{b,d_{i_1}}(n) < W_b$.
	At this point, we can repeat the previous reasonings with $\mathcal{D}^\prime := \mathcal{D}_b(n) \setminus \{d_{i_1}\}$ in place of $\mathcal{D}_b(n)$.
	If $\mathcal{D}^\prime \in \Omega_b$ then the claim follows.
	If $\mathcal{D}^\prime \notin \Omega_b$, then there exist $k^\prime$ pairwise distinct digits in $\mathcal{D}^\prime$ such that a certain product of them is divisible by $b$, \dots and so on.
	
	It is clear that this procedure terminates after at most $|\mathcal{D}_b(n)|$ steps (note that $\varnothing \in \Omega_b$), thus producing the desidered $\mathcal{D} \in \Omega_b$.
\end{proof}

\begin{remark}\label{rmk:Omega-b-star}
	It follows easily that the elements of $\Omega_b^*$ are the sets
	\begin{equation*}
		\big\{d \in \{1, \dots, b - 1\} : p \nmid d \big\} ,
	\end{equation*}
	where $p$ runs over the prime factors of $b$.
	In particular, if $b$ is a prime number then the only element of $\Omega_b^*$ is $\{1, \dots, b-1\}$.
\end{remark}

\subsection{Entropy and multinomial sums}

For all real numbers $x_1, \dots, x_k \in [0,1]$ such that $\sum_{i=1}^k x_i = 1$, define the \emph{entropy}
\begin{equation*}
    H(x_1, \dots, x_k) := - \sum_{i \,=\, 1}^k x_i \log x_i ,
\end{equation*}
with the usual convention that $0 \log 0 := 0$.

\begin{lemma}\label{lem:multinomial}
    Let $N_1, \dots, N_k \geq 0$ be integers and put $N := N_1 + \cdots + N_k$.
    Then
    \begin{equation*}
        \frac{N!}{N_1! \cdots N_k!} = \exp\!\left(H\!\left(\frac{N_1}{N}, \dots, \frac{N_k}{N}\right) N + O_k(\log N)\right) ,
    \end{equation*}
    as $N \to +\infty$.
\end{lemma}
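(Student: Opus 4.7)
The plan is to take logarithms and apply Stirling's formula to every factorial in sight. Concretely, for $n \geq 1$ one has $\log n! = n \log n - n + O(\log n)$; together with the convention $\log 0! = 0$ (which matches $0 \log 0 := 0$), this lets us handle every index uniformly. First I would apply Stirling to $N!$ to obtain $\log N! = N \log N - N + O(\log N)$, and then to each $N_i!$ with $N_i \geq 1$ to get $\log N_i! = N_i \log N_i - N_i + O(\log N_i)$. Since $N_i \leq N$, each of the at most $k$ such errors is $O(\log N)$, so their sum is $O_k(\log N)$.

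Next, I would subtract and use $\sum_{i=1}^k N_i = N$ to cancel the linear terms $-N$ and $\sum_i N_i$ against each other. What remains is
\begin{equation*}
    \log\!\frac{N!}{N_1! \cdots N_k!} = N \log N - \sum_{i=1}^k N_i \log N_i + O_k(\log N) = -\sum_{i=1}^k N_i \log\!\frac{N_i}{N} + O_k(\log N),
\end{equation*}
and the last expression equals $N \cdot H(N_1/N, \dots, N_k/N) + O_k(\log N)$ by definition of the entropy. Exponentiating yields the claimed formula.

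The only mildly delicate point is the treatment of indices with $N_i = 0$: Stirling's estimate must be skipped for those, but $\log 0! = 0$ together with the convention $0 \log 0 := 0$ makes such terms contribute exactly zero to both the factorial sum and to $H$, so they can be harmlessly omitted. Beyond this bit of bookkeeping the argument is entirely routine, and I do not foresee any real obstacle.
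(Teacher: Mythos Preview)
Your proposal is correct and follows exactly the approach of the paper, which simply states that the claim follows easily from Stirling's formula in the form $\log n! = n \log n - n + O(\log n)$. You have merely written out the routine details (including the harmless handling of indices with $N_i = 0$) that the paper leaves implicit.
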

\begin{proof}
    The claim follows easily from Stirling's formula in the form
    \begin{equation*}
        \log n! = n \log n - n + O(\log n) ,
    \end{equation*}
    as $n \to +\infty$.
\end{proof}

\begin{lemma}\label{lem:max-entropy}
    Let $a_1, \dots, a_k \geq 0$ and $c$ be real numbers such that
    \begin{equation}\label{equ:min-mean-condition}
        \min\{a_1,\dots,a_k\} < c < \frac1{k}\sum_{i \,=\, 1}^k a_i .
    \end{equation}
    Then the equation
    \begin{equation}\label{equ:max-entropy-equation}
        \sum_{i \,=\, 1}^k (a_i - c) e^{a_i \lambda} = 0
    \end{equation}
    has a unique solution $\lambda \in (-\infty, 0)$.
    Moreover, the maximum of $H(x_1, \dots, x_k)$ under the constraints $\sum_{i=1}^k x_i = 1$ and $\sum_{i = 1}^k a_i x_i \leq c$ is equal to
    \begin{equation*}
        H_{\max} := -c \lambda + \log\!\left(\sum_{i \,=\, 1}^k e^{a_i \lambda} \right) ,
    \end{equation*}
    and it is achieved if and only if $x_i = e^{a_i \lambda} / \sum_{j=1}^k e^{a_j \lambda}$ for $i=1,\dots,k$.
\end{lemma}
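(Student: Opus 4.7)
The plan is to recast both parts of the statement in terms of the log--partition function $F(\lambda) := \log\!\sum_{i=1}^k e^{a_i\lambda}$. A direct computation shows that $F'(\lambda)$ equals the mean of the $a_i$ under the Gibbs weights $q_i(\lambda) := e^{a_i\lambda}/\sum_j e^{a_j\lambda}$, so that equation~\eqref{equ:max-entropy-equation} is equivalent to $F'(\lambda) = c$. Since $F''(\lambda)$ equals the variance of the $a_i$ under these weights, and hypothesis~\eqref{equ:min-mean-condition} rules out that all $a_i$ are equal, $F'$ is strictly increasing. Together with the boundary values $F'(0) = \tfrac{1}{k}\sum_i a_i$ and $\lim_{\lambda\to-\infty} F'(\lambda) = \min_i a_i$, which by~\eqref{equ:min-mean-condition} strictly sandwich $c$, the intermediate value theorem produces a unique $\lambda \in (-\infty,0)$ with $F'(\lambda) = c$.

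For the second part I would use a Gibbs variational argument. Fix the $\lambda$ just obtained, set $Z := \sum_j e^{a_j\lambda}$ and $q_i := e^{a_i\lambda}/Z$, so that $q = (q_1,\dots,q_k)$ is a probability vector with $\sum_i a_i q_i = F'(\lambda) = c$. For any probability vector $x = (x_1,\dots,x_k)$, the identity $\log q_i = a_i\lambda - \log Z$ rearranges to
\[
    H(x_1,\dots,x_k) \;=\; -\sum_{i\,=\,1}^k x_i \log\!\frac{x_i}{q_i} \;-\; \lambda\sum_{i\,=\,1}^k a_i x_i \;+\; \log Z.
\]
The first term is minus the Kullback--Leibler divergence $D(x\,\|\,q)$, so it is $\le 0$, with equality iff $x = q$ (by strict convexity of $t\log t$). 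Under the constraint $\sum_i a_i x_i \le c$ and using $\lambda < 0$, the second term is at most $-\lambda c$; hence $H(x) \le -\lambda c + \log Z = H_{\max}$, and equality forces $x = q$, which in turn automatically satisfies $\sum_i a_i q_i = c$. This yields both the value of the maximum and the uniqueness of the maximizer.

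The computations are standard and I do not anticipate a serious obstacle, but the point worth flagging is the essential role of \emph{both} inequalities in~\eqref{equ:min-mean-condition}. The lower bound $\min_i a_i < c$ is what makes the feasible set nonempty and pulls $\lim_{\lambda\to-\infty}F'(\lambda)$ below $c$; the upper bound $c < \tfrac{1}{k}\sum_i a_i$ is what places $F'(0)$ above $c$, and hence forces $\lambda < 0$ strictly, equivalently, forces the constraint $\sum_i a_i x_i \le c$ to be active at the optimum. This strict negativity of $\lambda$ is precisely what makes the inequality $-\lambda \sum_i a_i x_i \le -\lambda c$ run in the required direction, tying together the two halves of the lemma.
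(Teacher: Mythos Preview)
Your argument is correct. The first half---showing that \eqref{equ:max-entropy-equation} has a unique negative root---is essentially identical to the paper's: both recognise the left side as (a multiple of) $F'(\lambda)-c$ for the log-partition function, use that $F''$ is a variance and hence strictly positive once the $a_i$ are not all equal, and invoke the intermediate value theorem between $\lambda\to-\infty$ and $\lambda=0$.

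The second half is where you depart from the paper. The paper runs a Lagrange-multiplier computation: it introduces a slack variable $x_{k+1}$ to convert the inequality constraint into an equality, solves the resulting system $\partial L/\partial x_i=0$, uses the hypothesis $c<\tfrac1k\sum a_i$ to force $\lambda_2\neq 0$ (hence $x_{k+1}=0$, i.e.\ the constraint is active), and finally appeals to the Hessian of $H$ to confirm the stationary point is a maximum. Your Gibbs-variational route is cleaner: the identity $H(x)=-D(x\Vert q)-\lambda\sum_i a_i x_i+\log Z$ together with $D(x\Vert q)\ge 0$ and $\lambda<0$ gives the bound and the equality case in one stroke, with no slack variables, no rank checks, and no second-order analysis. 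What the paper's approach buys is that it is the textbook template and requires no information-theoretic vocabulary; what yours buys is brevity and a transparent explanation of \emph{why} the constraint must bind (the sign of $\lambda$ does the work that the paper extracts from $\lambda_2\neq 0$).
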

\begin{proof}
    First, note that $a_1, \dots, a_k$ are not all equal, otherwise~\eqref{equ:min-mean-condition} would not be satisfied.
    For every real number $t$, define
    \begin{equation*}
        F(t) := \frac{\sum_{i = 1}^k a_i e^{a_i t}}{\sum_{i = 1}^k e^{a_i t}} - c .
    \end{equation*}
    We have that
    \begin{equation*}
        F^\prime(t) = \frac{\left(\sum_{i = 1}^k a_i^2 e^{a_i t}\right)\left(\sum_{i = 1}^k e^{a_i t}\right) - \left(\sum_{i = 1}^k a_i e^{a_i t}\right)^2}{\left(\sum_{i = 1}^k e^{a_i t}\right)^2} > 0 ,
    \end{equation*}
    where we applied the Cauchy--Schwarz inequality, which is strict since $a_1, \dots, a_k$ are not all equal.
    Furthermore, by \eqref{equ:min-mean-condition} we have that
    \begin{equation*}
        \lim_{t \,\to\, -\infty} F(t) = \min\{a_1, \dots, a_k\} - c < 0 \quad\text{ and }\quad F(0) = \frac1{k} \sum_{i \,=\, 1}^k a_i - c > 0 .
    \end{equation*}
    Therefore, there exists a unique $\lambda \in (-\infty, 0)$ such that $F(\lambda) = 0$, which is equivalent to~\eqref{equ:max-entropy-equation}.

    The rest of the lemma is a standard application of the method of Lagrange multipliers (cf.\ \cite[p.\ 328, Example~3]{MR3363684}).
    We only sketch the proof.
    After introducing the slack variable $x_{k+1}$, the problem becomes to maximize $H = -\sum_{i = 1}^k x_i \log x_i$ under the constrains $G_1 := \sum_{i = 1}^k x_i - 1 = 0$ and $G_2 := \sum_{i = 1}^k a_i x_i + x_{k+1}^2 - c = 0$.
    Let $L := H + \lambda_1 G_1 + \lambda_2 G_2$, where $\lambda_1, \lambda_2 \in \mathbb{R}$ are the Lagrange multipliers.
    Note that the $2 \times (k + 1)$ matrix whose entry of the $i$th row and $j$th colum is equal to $\partial G_i / \partial x_j$ has full rank.
    Hence, by Lagrange's theorem~\cite[p.~326, Theorem]{MR3363684}, the constrained local extrema of $H$ are obtained by solving the system of $k+2$ equations $\partial L / \partial x_i = 0$ ($i=1,\dots,k$) and $\partial L / \partial \lambda_j = 0$ ($j=1,2$).
    This system has the unique solution $x_i = e^{\lambda_1 + a_i \lambda_2 - 1}$ ($i=1,\dots,k$), $x_{k+1} = 0$, $\lambda_1 = 1 - \log\!\big(\sum_{i=1}^k e^{a_i \lambda_2}\big)$, and $\lambda_2 = \lambda$.
    We point out that in the proof of this last claim the condition $c < \sum_{i=1}^k a_i / k$ is used to show that $\lambda_2 \neq 0$, which in turn implies that $x_{k+1} = 0$.
    Finally, that the local extremum is a maximum follows easily from the study of the Hessian matrix of $H$.
\end{proof}

\begin{remark}\label{rmk:converse}
    Somehow conversely to the first part of Lemma~\ref{lem:max-entropy},
    it is not difficult to prove that if $a_1, \dots, a_k \geq 0$, $c$, and $\lambda$ are real numbers satifying~\eqref{equ:max-entropy-equation}, then \eqref{equ:min-mean-condition} holds.
\end{remark}

\begin{lemma}\label{lem:multinomial-sum}
    Let $a_1, \dots, a_k, c$ and $H_{\max}$ be as in Lemma~\ref{lem:max-entropy}, let $h \geq k$ be an integer, and let $C \geq 0$ be a real number.
    Then we have that
    \begin{equation}\label{equ:multinomial-sum0}
        \sum \frac{N!}{N_1! \cdots N_h!} = \exp\!\big((H_{\max} + o(1)) N \big) ,
    \end{equation}
    as $N \to +\infty$, where $N := N_1 + \cdots + N_h$, the sum is over all integers $N_1, \dots, N_h \geq 0$ such that $\sum_{i = 1}^k a_i N_i \leq c N$ and $N_i \leq C$ for each integer $i \in {(k,h]}$, and the $o(1)$ depends only on $a_1, \dots, a_k, c, h, C$.
\end{lemma}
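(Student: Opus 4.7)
The plan is to combine Lemma~\ref{lem:multinomial}, which rewrites each multinomial coefficient as $\exp(H(N_1/N, \dots, N_h/N)\cdot N + O_h(\log N))$, with Lemma~\ref{lem:max-entropy}, which identifies $H_{\max}$ as the maximum of the entropy on the relevant simplex under a linear constraint. Since the number of admissible tuples is at most $(N+1)^h = e^{O_h(\log N)}$, the sum in~\eqref{equ:multinomial-sum0} differs from its largest term by only a subexponential factor. It thus suffices to show that, as $N \to +\infty$, the supremum of $H(N_1/N, \dots, N_h/N)$ over admissible tuples equals $H_{\max} + o(1)$; matching upper and lower bounds for this supremum will complete the proof.

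For the upper bound, fix an admissible tuple and set $x_i := N_i/N$. The constraint $N_i \leq C$ for $i > k$ forces $x_i \leq C/N$ at those indices, so their joint entropy contribution is at most $(h-k)\sup_{0 \leq t \leq C/N}(-t\log t) = O(N^{-1}\log N)$, and $S := \sum_{i=1}^k x_i = 1 - O(N^{-1})$. Writing $y_i := x_i/S$ for $i \leq k$, a direct rearrangement gives
\begin{equation*}
    -\sum_{i\,=\,1}^k x_i \log x_i \,=\, -S\log S + S\,H(y_1,\dots,y_k) \,=\, H(y_1,\dots,y_k) + o(1),
\end{equation*}
while the constraint $\sum_{i=1}^k a_i x_i \leq c$ becomes $\sum_{i=1}^k a_i y_i \leq c/S = c + o(1)$. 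Lemma~\ref{lem:max-entropy} now applies with threshold $c/S$ in place of $c$, and the resulting maximum depends continuously on the threshold (immediate from the continuous dependence of $\lambda$ via~\eqref{equ:max-entropy-equation} and Remark~\ref{rmk:converse}). Hence $H(y_1,\dots,y_k) \leq H_{\max} + o(1)$, uniformly in the admissible tuple, yielding the desired upper bound.

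For the lower bound, it suffices to exhibit a single admissible tuple whose multinomial coefficient already realises the desired rate. Given $\varepsilon > 0$, the same continuity produces some $c' \in (\min_i a_i, c)$ whose associated maximum entropy exceeds $H_{\max} - \varepsilon$; let $x^* = (x_1^*,\dots,x_k^*)$ be the corresponding maximizer from Lemma~\ref{lem:max-entropy}, so that $\sum_{i=1}^k a_i x_i^* = c' < c$ strictly. Set $N_i := \lfloor N x_i^* \rfloor$ for $i \leq k$ (adjusting one entry by $O(1)$ so that the total is exactly $N$) and $N_i := 0$ for $i > k$. For all $N$ sufficiently large in terms of $c - c'$, both constraints are satisfied, and Lemma~\ref{lem:multinomial} bounds the corresponding term below by $\exp((H_{\max} - \varepsilon)N + O_h(\log N))$. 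Letting $\varepsilon \to 0$ slowly as $N \to \infty$ closes the bound.

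The main technical nuisance is the interaction between the $O(N^{-1})$ slack coming from integer rounding and the affine constraint $\sum a_i x_i \leq c$: the exact maximizer of Lemma~\ref{lem:max-entropy} lies on the boundary of this constraint, so naive rounding can destroy feasibility. The perturbation from $c$ to a slightly smaller $c'$ is the cleanest way to decouple these two sources of error and thereby obtain a tuple that is simultaneously feasible and close to optimal.
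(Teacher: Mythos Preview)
Your proof is correct and follows essentially the same strategy as the paper's: both reduce the sum to its maximum term (using that there are only polynomially many admissible tuples), invoke Lemma~\ref{lem:multinomial} to pass to entropy, and then appeal to Lemma~\ref{lem:max-entropy} together with continuity of $H_{\max}$ in the threshold $c$ to absorb the $O(1/N)$ perturbations. The only cosmetic difference is that the paper strips off the bounded indices $i\in(k,h]$ by an algebraic factoring of the multinomial coefficient (equation~\eqref{equ:multinomial-sum1}) before estimating, whereas you keep the full entropy $H(x_1,\dots,x_h)$ and bound the contribution $-\sum_{i>k}x_i\log x_i = O(N^{-1}\log N)$ directly; the subsequent squeeze/perturbation arguments are equivalent.
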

\begin{proof}
    Let $N_- := \sum_{i=1}^k N_i$ and $N_+ := \sum_{i=k+1}^h N_i$.
    Then we have that
    \begin{align}\label{equ:multinomial-sum1}
        \sum \frac{N!}{N_1! \cdots N_h!} &= \sum_{\sum_{i = 1}^k a_i N_i \,\leq\, c N} \frac{N_-!}{\prod_{i = 1}^k (N_i!)} \, \sum_{\substack{N_i \,\leq\, C \\[1pt] i \,=\, k+1,\dots,h}} \frac{\prod_{n=1}^{N_+} (N_- + n)}{\prod_{i = k + 1}^h (N_i!)} \nonumber\\
        &= \sum_{\sum_{i = 1}^k a_i N_i \,\leq\, c N} \frac{N_-!}{\prod_{i = 1}^k (N_i!)} \; \exp\!\big(O_{C,h}(\log N)\big) ,
    \end{align}
    as $N \to +\infty$, where we employed the facts that $N_i \leq C$ for each integer $i \in {(k,h]}$, and consequently $N_+ \leq Ch$.
    Furthermore, letting
    \begin{equation*}
        M :=  \max_{\sum_{i = 1}^k a_i N_i \,\leq\, c N} \frac{N_-!}{\prod_{i = 1}^k (N_i!)} ,
    \end{equation*}
    we have that
    \begin{equation}\label{equ:multinomial-sum2}
        \sum_{\sum_{i = 1}^k a_i N_i \,\leq\, c N} \frac{N_-!}{\prod_{i = 1}^k (N_i!)} =  M \exp\!\big(O_k(\log N)\big) ,
    \end{equation}
    as $N \to +\infty$, where we exploited the fact that the sum in~\eqref{equ:multinomial-sum2} has at most $(N + 1)^k$ terms.

    For each $t \geq 0$ that is sufficiently small so that~\eqref{equ:min-mean-condition} holds if $c$ is replaced by $c + t$, let $H_{\max}(t)$ be the quantity corresponding to $H_{\max}$ if $c$ is replaced by $c + t$, and let
    \begin{equation*}
        M_-(t) :=  \max_{\sum_{i = 1}^k a_i N_i \,\leq\, (c+t) N_-} \frac{N_-!}{\prod_{i = 1}^k (N_i!)} .
    \end{equation*}
    Since $N_- \leq N \leq N_- + Ch$, we have that $c N_- \leq cN \leq (c + t)N_-$ for $t > 0$ and $N \geq Ch(c/t + 1)$.
    Hence, we get that
    \begin{equation}\label{equ:squeeze-maximum}
        M_-(0) \leq M \leq M_-(t) ,
    \end{equation}
    if $t > 0$ and $N \geq Ch(c/t + 1)$.
    Moreover, by Lemma~\ref{lem:multinomial} and Lemma~\ref{lem:max-entropy}, we get that, uniformly for bounded $t \geq 0$, it holds
    \begin{equation}\label{equ:M-minus-t}
        M_-(t) = \exp\!\left((H_{\max}(t) + o(1)) N_- \right) = \exp\!\left((H_{\max}(t) + o(1)) N\right) ,
    \end{equation}
    and $N \to +\infty$.
    Note that to deduce \eqref{equ:M-minus-t} we have to approximate $H(x_1, \dots, x_k)$, where $(x_1, \dots, x_k)$ is the point of maximum, with $H(N_1/N_-, \dots, N_k/N_-)$; and this can be done with an error of $O(1/N_-)$, since $H$ has bounded partial derivatives in a neighborhood of $(x_1, \dots, x_k)$.
    At this point, from~\eqref{equ:squeeze-maximum} and \eqref{equ:M-minus-t}, and noticing that $H_{\max}(t) \to H_{\max}$ as $t \to 0$, we get that
    \begin{equation}\label{equ:multinomial-sum3}
        M = \exp\!\left((H_{\max} + o(1)) N \right) ,
    \end{equation}
    as $N \to +\infty$.

    Finally, putting together \eqref{equ:multinomial-sum1}, \eqref{equ:multinomial-sum2}, and \eqref{equ:multinomial-sum3}, we get~\eqref{equ:multinomial-sum0}, as desired.
\end{proof}

\subsection{Further lemmas}

\begin{lemma}\label{lem:restricted-digits}
    Let $\mathcal{D} \subseteq \{1, \dots, b-1\}$ be nonempty, and let $(a_d)_{d \in \mathcal{D}}$ and $c$ be nonnegative real numbers such that
	\begin{equation*}
		\min_{d \,\in\, \mathcal{D}} a_d < c < \frac1{|\mathcal{D}|}\sum_{d \,\in\, \mathcal{D}} a_d .
	\end{equation*}
	Then the equation
	\begin{equation*}
		\sum_{d \,\in\, \mathcal{D}} (a_d - c) e^{a_d \lambda} = 0
	\end{equation*}
	has a unique solution $\lambda \in (-\infty, 0)$.
	Moreover, for each $C \geq 0$, we have that
    \begin{equation*}
		\sum \frac{N!}{N_1! \cdots N_{b-1}!} = \exp\!\big((H_{\max} + o(1)) N \big) ,
	\end{equation*}
	as $N \to +\infty$, where $N := \sum_{d=1}^{b-1} N_d$, the sum is over all integers $N_1, \dots, N_{b-1} \geq 0$ such that $\sum_{d \in \mathcal{D}} a_d N_d \leq c N$ and $N_d \leq C$ for each integer $d \in \{1,\dots,b-1\} \setminus \mathcal{D}$, 
	\begin{equation*}
		H_{\max} := -c \lambda + \log\!\left(\sum_{d \,\in\, \mathcal{D}} e^{a_d \lambda} \right) ,
	\end{equation*}
	and the $o(1)$ depends only on $(a_d)_{d \in \mathcal{D}}, c, h, C$.
\end{lemma}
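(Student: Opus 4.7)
The plan is to reduce this statement directly to Lemma~\ref{lem:max-entropy} and Lemma~\ref{lem:multinomial-sum} by a mere relabeling of indices. The point is that Lemma~\ref{lem:restricted-digits} is exactly the content of those two lemmas written in terms of the natural indexing ``by digits'' rather than ``by abstract indices $1,\dots,h$''.

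More concretely, set $k := |\mathcal{D}|$ and $h := b - 1$, so that $h \geq k$ since $\mathcal{D} \subseteq \{1,\dots,b-1\}$. I would fix any bijection $\sigma : \{1,\dots,h\} \to \{1,\dots,b-1\}$ that sends $\{1,\dots,k\}$ bijectively onto $\mathcal{D}$, and define $a_i^\prime := a_{\sigma(i)}$ for $i = 1,\dots,k$ and $N_i^\prime := N_{\sigma(i)}$ for $i = 1,\dots,h$. Under this relabeling, the hypothesis of Lemma~\ref{lem:restricted-digits} becomes
\begin{equation*}
    \min\{a_1^\prime, \dots, a_k^\prime\} < c < \frac{1}{k}\sum_{i \,=\, 1}^k a_i^\prime ,
\end{equation*}
which is precisely condition~\eqref{equ:min-mean-condition} of Lemma~\ref{lem:max-entropy}. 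Hence that lemma yields the existence and uniqueness of $\lambda \in (-\infty, 0)$ solving~\eqref{equ:max-entropy-equation}, and the displayed expression for $H_{\max}$ in Lemma~\ref{lem:restricted-digits} is exactly the expression for $H_{\max}$ given in Lemma~\ref{lem:max-entropy}.

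For the multinomial sum, the constraint $\sum_{d \in \mathcal{D}} a_d N_d \leq cN$ rewrites as $\sum_{i=1}^k a_i^\prime N_i^\prime \leq c N$, and the constraint $N_d \leq C$ for $d \in \{1,\dots,b-1\} \setminus \mathcal{D}$ becomes $N_i^\prime \leq C$ for $i \in (k,h]$, so the conditions of Lemma~\ref{lem:multinomial-sum} are met verbatim. Since the multinomial coefficient $N! / (N_1! \cdots N_{b-1}!)$ is invariant under permutation of its denominator indices, applying Lemma~\ref{lem:multinomial-sum} directly delivers the asymptotic~$\exp((H_{\max} + o(1)) N)$ claimed here. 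There is no real obstacle: the only thing to verify is that the bijection preserves the two constraints (immediate) and that nothing in the statement of Lemma~\ref{lem:multinomial-sum} depends on the first $k$ indices being literally $1,\dots,k$ rather than an arbitrary subset of size $k$ (it does not, by symmetry of the multinomial coefficient). Thus the lemma follows essentially by notation.
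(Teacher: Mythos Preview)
Your proposal is correct and is essentially the same as the paper's own proof, which simply states that the claim is a direct consequence of Lemma~\ref{lem:max-entropy} and Lemma~\ref{lem:multinomial-sum} after an appropriate change of the indices of the $N_i$'s. You have spelled out that relabeling explicitly, but the argument is identical.
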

\begin{proof}
	The claim is a direct consequence of Lemma~\ref{lem:max-entropy} and Lemma~\ref{lem:multinomial-sum}, after an appropriate change of the indices of the $N_i$'s.
\end{proof}

\begin{lemma}\label{lem:from-N-to-x}
    Let $\mathcal{S}$ be a set of positive integers, let $\mathcal{S}(x) := \mathcal{S} \cap [1, x]$ for every real number $x \geq 1$, let $\mathcal{S}_N := \mathcal{S} \cap {[b^{N-1}, b^N)}$ for each integer $N \geq 1$, and let $t \in (0, 1)$.
    If $|\mathcal{S}_N| \leq b^{(t + o(1))N}$ as $N \to +\infty$, then $|\mathcal{S}(x)| \leq x^{t + o(1)}$ as $x \to +\infty$.
    Moreover, the previous claim holds if the two inequalities are reversed, or replaced by equalities.
\end{lemma}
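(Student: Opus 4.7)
The plan is to translate the dyadic-style bounds on $|\mathcal{S}_N|$ into bounds on $|\mathcal{S}(x)|$ by choosing $N$ to be approximately $\log_b x$, so that $\mathcal{S}(x)$ is either covered by, or contains, one or more of the blocks $\mathcal{S}_N$.

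For the upper bound direction, I would let $N := \lceil \log_b x \rceil$, so that $[1, x] \subseteq [1, b^N)$ and hence $\mathcal{S}(x) \subseteq \bigcup_{M = 1}^{N} \mathcal{S}_M$. Given any $\varepsilon > 0$, the hypothesis supplies some $N_0$ such that $|\mathcal{S}_M| \leq b^{(t + \varepsilon) M}$ for all $M \geq N_0$; the finitely many terms with $M < N_0$ contribute only an $O(1)$ constant. Summing,
\begin{equation*}
    |\mathcal{S}(x)| \;\leq\; O(1) + \sum_{M \,=\, N_0}^{N} b^{(t + \varepsilon) M} \;\leq\; C \cdot N \cdot b^{(t + \varepsilon) N} ,
\end{equation*}
and since $N = \log_b x + O(1)$, the right-hand side is $x^{t + \varepsilon + o(1)}$ as $x \to +\infty$. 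Letting $\varepsilon \to 0$ (through a countable sequence, to avoid uniformity issues) yields $|\mathcal{S}(x)| \leq x^{t + o(1)}$.

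For the lower bound direction, I would instead take $N := \lfloor \log_b x \rfloor$, so that $b^N \leq x + 1$ and every element of $\mathcal{S}_N \subseteq [b^{N-1}, b^N)$ lies in $[1, x]$; hence $\mathcal{S}_N \subseteq \mathcal{S}(x)$. Therefore $|\mathcal{S}(x)| \geq |\mathcal{S}_N| \geq b^{(t + o(1)) N}$, and again $N = \log_b x + O(1)$ immediately converts this into $|\mathcal{S}(x)| \geq x^{t + o(1)}$. The equality case is obtained by combining the two directions.

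I do not expect any real obstacle. The only mildly delicate point is bookkeeping the $o(1)$ in the upper-bound step: the $N \approx \log_b x$ prefactor arising from summing the geometric series is absorbed into $x^{o(1)}$, and the finitely many small-$M$ terms, for which the hypothesis $|\mathcal{S}_M| \leq b^{(t + o(1))M}$ has not yet kicked in, contribute only a bounded additive constant, also absorbable into $x^{o(1)}$.
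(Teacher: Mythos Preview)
Your proof is correct and follows essentially the same approach as the paper's. The only cosmetic difference is in the upper-bound step: the paper splits the sum at a cutoff $M_x \approx \tfrac{t}{2}\log_b x$ that grows with $x$ (bounding the small-$M$ contribution trivially by $x^{t/2}$), whereas you split at a fixed $N_0 = N_0(\varepsilon)$ and then let $\varepsilon \to 0$; both devices serve the same purpose of absorbing the uncontrolled initial terms, and the lower-bound arguments are identical.
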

\begin{proof}
    For every real number $x \geq 1$, let $N_x := \lfloor \log x / \! \log b \rfloor + 1$ and $M_x := \tfrac1{2} t \lfloor \log x / \! \log b \rfloor$.
    On the one hand, if $|\mathcal{S}_N| \leq b^{(t + o(1))N}$ as $N \to +\infty$, then we get that
    \begin{equation*}
        |\mathcal{S}(x)| \leq x^{t/2} + \sum_{N \,=\, M_x + 1}^{N_x} |\mathcal{S}_N| \leq x^{t/2} + N_x b^{(t + o(1)) N_x} = x^{t + o(1)} ,
    \end{equation*}
    as $x \to +\infty$.
    On the other hand,  if $|\mathcal{S}_N| \geq b^{(t + o(1))N}$ as $N \to +\infty$, then we get that
    \begin{equation*}
        |\mathcal{S}(x)| \geq |\mathcal{S}_{N_x - 1}| \geq b^{(t + o(1))(N_x - 1)} = x^{t + o(1)} ,
    \end{equation*}
    as $x \to +\infty$.
\end{proof}

\begin{lemma}\label{lem:strange-bound}
    If $b \geq 6$ then
    \begin{equation}\label{equ:strange-bound1}
        \sum_{d \,\in\, \mathcal{D}} \log d > \frac{|\mathcal{D}|\log b}{2}
    \end{equation}
    for each $\mathcal{D} \in \Omega_b^*$.
\end{lemma}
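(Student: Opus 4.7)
My plan is, first, to apply Remark~\ref{rmk:Omega-b-star} to restrict attention to sets of the form $\mathcal{D}_p := \{d \in \{1, \ldots, b-1\} : p \nmid d\}$ with $p$ a prime divisor of $b$. Since $p \mid b$, the map $d \mapsto b-d$ is an involution on $\mathcal{D}_p$, whence
$$\left(\prod_{d \in \mathcal{D}_p} d\right)^{\!2} = \prod_{d \in \mathcal{D}_p} d(b-d),$$
and so the target inequality \eqref{equ:strange-bound1} is equivalent to $\prod_{d \in \mathcal{D}_p} d(b-d)/b > 1$. This form is convenient because each factor equals $(b-1)/b$ at the endpoints $d \in \{1, b-1\}$ and is strictly larger than $1$ for every other $d \in \{2, \ldots, b-2\}$ (simply $d(b-d) > b$ holds for $b \geq 5$).

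Next, I would lower bound the whole product by $\bigl((b-1)/b\bigr)^2 \bigl(d_0(b-d_0)/b\bigr)^{|\mathcal{D}_p|-2}$, where $d_0$ denotes the smallest non-endpoint element of $\mathcal{D}_p$. The key observation is that $d_0 = 2$ when $p \geq 3$ and $d_0 = 3$ when $p = 2$; using the sharper value $d_0 = 3$ in the latter case is what permits a uniform treatment. For $p \geq 3$ and $b \geq 6$, I would use $d_0(b-d_0)/b = 2 - 4/b \geq 4/3$ together with $|\mathcal{D}_p| \geq (b-1)(1 - 1/p) \geq 2(b-1)/3$, which forces $|\mathcal{D}_p| \geq 4$, yielding the lower bound $(5/6)^2(4/3)^2 = 100/81 > 1$. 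For $p = 2$ and $b \geq 6$, I would use $d_0(b-d_0)/b = 3 - 9/b \geq 3/2$ together with $|\mathcal{D}_p| = b/2 \geq 3$, yielding $(5/6)^2(3/2) = 25/24 > 1$.

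The only delicate subcase is $p = 2$, $b = 6$, where $\mathcal{D}_p = \{1, 3, 5\}$ contains only the single (self-paired) non-endpoint element $d = 3$, and \eqref{equ:strange-bound1} holds by a margin of only a few percent. The reason the plan absorbs this case without a separate verification is precisely the choice $d_0 = 3$ when $p = 2$: the sharper factor $3(b-3)/b = 3/2$ at $b = 6$ just beats the deficit $(b/(b-1))^2 = 36/25$ produced by the endpoint pair $\{1, b-1\}$. The weaker universal choice $d_0 = 2$ would fail here, which is the reason for handling the two parities of $p$ separately. Once the pairing reformulation is in place, the whole argument reduces to two short, explicit numerical checks.
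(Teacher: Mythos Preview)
Your proposal is correct and takes a genuinely different route from the paper. The paper's proof handles prime $b$ directly, then verifies composite $b \in [6,84]$ by computer, and finally disposes of $b > 84$ via the crude factorial bounds $n\log n - n \leq \log(n!) \leq n\log n$. Your argument instead exploits the involution $d \mapsto b-d$ on $\mathcal{D}_p$ to recast \eqref{equ:strange-bound1} as $\prod_{d \in \mathcal{D}_p} d(b-d)/b > 1$, isolates the two deficient endpoint factors $(b-1)/b$, and lower-bounds every remaining factor by the extremal one $d_0(b-d_0)/b$. The case split $p=2$ versus $p\geq 3$ (with $d_0=3$ versus $d_0=2$) is exactly what is needed to make the tightest instance $b=6$, $p=2$ go through, where your bound $(5/6)^2(3/2)=25/24$ is in fact the exact value of the product. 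What your approach buys is a fully self-contained hand proof with no computer verification and no asymptotic estimates; what the paper's approach buys is mechanical simplicity once the finite check is accepted, since the Stirling step for $b>84$ requires no cleverness.
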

\begin{proof}
    By Remark~\ref{rmk:Omega-b-star}, if $b$ is prime then $\mathcal{D} = \{1, \dots, b - 1\}$, and the claim follows easily.
    Hence, assume that $b$ is composite.
    Again by Remark~\ref{rmk:Omega-b-star}, we have that
    \begin{equation}\label{equ:D-description}
        \mathcal{D} = \big\{d \in \{1, \dots, b-1\} : p \nmid d \big\} ,
    \end{equation}
    where $p$ is a prime factor of $b$.
    With the aid of a computer, we can verify the claim for each composite $b \in [6, 84]$.
    Hence, assume that $b > 84$.
    Let $m := \lfloor (b-1)/p \rfloor$.
    From~\eqref{equ:D-description} it follows that $|\mathcal{D}| = b - 1 - m$ and
    \begin{equation*}
        \sum_{d \,\in\, \mathcal{D}} \log d = \log\!\big((b-1)!\big) - \sum_{k \,=\, 1}^m \log (pk) = \log\!\big((b-1)!\big) - m \log p - \log(m!) .
    \end{equation*}
    Therefore, we have that~\eqref{equ:strange-bound1} is equivalent to
    \begin{equation}\label{equ:strange-bound2}
        \log\!\big((b-1)!\big) - \tfrac1{2}(b-1)\log b > \log(m!) + m \log p - \tfrac1{2}m\log b .
    \end{equation}
    Employing the bounds $n \log n - n \leq \log(n!) \leq n \log n$, holding for every integer $n \geq 1$, we get that
    \begin{align*}
        \log\!\big((b-1)!\big) - \tfrac1{2}(b-1)\log b &\geq (b-1)\left(\tfrac1{2}\log(b-1) - 1 - \tfrac1{2}\log\!\big(b / (b-1)\big)\right) \\
        &> (b-1)\left(\tfrac1{2}\log(b-1) - 1.1\right)
    \end{align*}
    while
    \begin{align*}
        \log(m!) + m \log p - \tfrac1{2}m\log b &\leq m \log(mp) - \tfrac1{2} m \log (b - 1) \\
        &\leq \tfrac1{2} m \log (b - 1) \\
        &\leq \tfrac1{4} (b - 1) \log(b-1)
    \end{align*}
    since $mp \leq b-1$ and $m \leq (b - 1) / 2$.
    Hence, recalling that $b > 84$, we get that~\eqref{equ:strange-bound2} is satisfied.
\end{proof}

\begin{remark}
    One can check that \eqref{equ:strange-bound1} does not hold if $b \in \{3,4,5\}$ and $\mathcal{D} \in \Omega_b^*$.
\end{remark}

For all real numbers $s \geq 0$ and for every $\mathcal{D} \in \Omega_b^*$, define
\begin{equation*}
	\zeta_{\mathcal{D}}(s) := \sum_{d \,\in\, \mathcal{D}} \frac1{d^s} .
\end{equation*}
The next lemma regards an equation involving $\zeta_{\mathcal{D}}(s)$.

\begin{lemma}\label{lem:critical-equation}
	Let $\mathcal{D} \in \Omega_b^*$.
	If $v = 1$, or $v = 2$ and $b \geq 6$, then the equation
	\begin{equation}\label{equ:critical-equation}
        \left(s + \frac{v\log\!|\mathcal{D}|}{\log b}\right) \frac{\zeta_{\mathcal{D}}^\prime(s)}{\zeta_{\mathcal{D}}(s)} - \log\!\left(\frac{\zeta_{\mathcal{D}}(s)}{|\mathcal{D}|^v}\right) = 0 .
	\end{equation}
	has a unique solution $s \in (0, +\infty)$.
\end{lemma}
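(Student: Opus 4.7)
The plan is to set $F(s)$ equal to the left-hand side of~\eqref{equ:critical-equation} and establish three facts on $(0,+\infty)$: $F$ is strictly increasing, $F(0) < 0$, and $F(s) \to v\log|\mathcal{D}| > 0$ as $s \to +\infty$. Existence and uniqueness of the root in $(0,+\infty)$ then follow from the intermediate value theorem and strict monotonicity.

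For monotonicity, write $L(s) := \log\zeta_{\mathcal{D}}(s)$, so that $\zeta_{\mathcal{D}}^\prime(s)/\zeta_{\mathcal{D}}(s) = L^\prime(s)$ and $F(s) = (s + v\log|\mathcal{D}|/\log b)\,L^\prime(s) - L(s) + v\log|\mathcal{D}|$. Differentiating, the $L^\prime(s)$ and $-L^\prime(s)$ contributions cancel, yielding
\[
    F^\prime(s) = \left(s + \frac{v\log|\mathcal{D}|}{\log b}\right) L^{\prime\prime}(s).
\]
The strict log-convexity of Dirichlet polynomials with positive coefficients (apply the Cauchy--Schwarz inequality to the vectors $\bigl((\log d)\,d^{-s/2}\bigr)_{d\in\mathcal{D}}$ and $\bigl(d^{-s/2}\bigr)_{d\in\mathcal{D}}$, noting that equality would force the values $\log d$ to be constant on $\mathcal{D}$, contrary to $|\mathcal{D}| \geq 2$) gives $L^{\prime\prime}(s) > 0$. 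Hence $F^\prime(s) > 0$ for every $s > 0$.

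For the boundary behavior, a short calculation using $\zeta_{\mathcal{D}}(0) = |\mathcal{D}|$ and $\zeta_{\mathcal{D}}^\prime(0) = -\sum_{d \in \mathcal{D}} \log d$ gives
\[
    F(0) = \log|\mathcal{D}| \left( (v-1) - \frac{v}{|\mathcal{D}|\log b}\sum_{d \in \mathcal{D}} \log d\right).
\]
When $v=1$ the bracket equals $-(\sum_{d\in\mathcal{D}} \log d)/(|\mathcal{D}|\log b)$, which is strictly negative because $|\mathcal{D}| \geq 2$ forces at least one $d \in \mathcal{D}$ with $d \geq 2$. When $v=2$ and $b \geq 6$, the bracket equals $1 - \tfrac{2}{|\mathcal{D}|\log b}\sum_{d\in\mathcal{D}} \log d$, which is strictly negative by Lemma~\ref{lem:strange-bound}. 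In either case $F(0) < 0$. For the behavior at infinity, observe that $1 \in \mathcal{D}$ (since $\gcd(b,1) = 1$ and every such digit belongs to each element of $\Omega_b^*$), so $\zeta_{\mathcal{D}}(s) \to 1$ and $\zeta_{\mathcal{D}}^\prime(s)/\zeta_{\mathcal{D}}(s)$ decays exponentially; the polynomial prefactor $s + v\log|\mathcal{D}|/\log b$ is dominated, so $F(s) \to v\log|\mathcal{D}| > 0$.

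No step poses a real obstacle: the derivative identity collapses by design, the Cauchy--Schwarz convexity argument is standard, and the limits at $0$ and $+\infty$ are routine. The only conceptually delicate point is the use of Lemma~\ref{lem:strange-bound} to sign $F(0)$ when $v=2$, which is precisely what forces the hypothesis $b \geq 6$ in that case.
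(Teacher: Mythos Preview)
Your argument is correct and follows essentially the same route as the paper: both compute the derivative, show it is positive via Cauchy--Schwarz (your $L''(s)>0$ is exactly the paper's $\zeta_{\mathcal D}\zeta_{\mathcal D}''-(\zeta_{\mathcal D}')^2>0$ divided by $\zeta_{\mathcal D}^2$), evaluate the function at $0$ and invoke Lemma~\ref{lem:strange-bound} for the case $v=2$, and check the limit as $s\to+\infty$. The only cosmetic difference is that the paper records the limit as $v\log|\mathcal D|\,(1-\log\min(\mathcal D)/\log b)$, which coincides with your $v\log|\mathcal D|$ once one uses $1\in\mathcal D$.
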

\begin{proof}
	Let $G_{b,\mathcal{D},v}(s)$ denote the left-hand side of \eqref{equ:critical-equation}.
    A quick computation yields that
    \begin{equation*}
        G_{b,\mathcal{D},v}^\prime(s) = \left(s + \frac{v\log\!|\mathcal{D}|}{\log b}\right) \frac{\zeta_{\mathcal{D}}(s) \, \zeta_{\mathcal{D}}^{\prime\prime}(s) - \big(\zeta_{\mathcal{D}}^\prime(s)\big)^2}{\big(\zeta_{\mathcal{D}}(s)\big)^2} > 0 ,
    \end{equation*}
    where we used the Cauchy--Schwarz inequality, which is strict since $|\mathcal{D}| \geq 2$.
    Moreover, we have that
    \begin{equation*}
        G_{b,\mathcal{D},v}(0) = -\left(1 + v\left(\frac{\sum_{d \in \mathcal{D}} \log d}{|\mathcal{D}|\log b} - 1\right)\right)\log\!|\mathcal{D}|,
    \end{equation*}
    and
    \begin{equation*}
        \lim_{s \,\to\, +\infty} G_{b,\mathcal{D},v}(s) = v \log\!|\mathcal{D}| \!\left(1 - \frac{\log \min(\mathcal{D})}{\log b}\right) > 0 .
    \end{equation*}
    Hence, if 
    \begin{equation}\label{equ:zero-test}
        1 + v\left(\frac{\sum_{d \in \mathcal{D}} \log d}{|\mathcal{D}|\log b} - 1\right) > 0
    \end{equation}
    then $G_{b,\mathcal{D},v}(s)$ has a unique zero in $(0, +\infty)$ (otherwise $G_{b,\mathcal{D},v}(s)$ has no zero).
    
    If $v = 1$ then it is clear that \eqref{equ:zero-test} holds.
    If $v = 2$ and $b \geq 6$, then~\eqref{equ:zero-test} is equivalent to~\eqref{equ:strange-bound1}, which in turn is true by Lemma~\ref{lem:strange-bound}.
\end{proof}

\section{Upper bound}\label{sec:upper-bound}

In light of Lemma~\ref{lem:critical-equation}, for every $\mathcal{D} \in \Omega_b^*$ let $s_{b,\mathcal{D},1}$ be the unique solution to \eqref{equ:critical-equation} with $v=1$ and define
\begin{equation*}
	z_{b,\mathcal{D}}^+ := \frac{\log|\mathcal{D}|}{\log b} \left(1 + \frac{\zeta_{\mathcal{D}}^\prime(s_{b,\mathcal{D},1})}{\zeta_{\mathcal{D}}(s_{b,\mathcal{D},1})\log b}\right) \quad\text{ and }\quad z_b^+ := \max_{\mathcal{D} \,\in\, \Omega_b^*} z_{b,\mathcal{D}}^+ .
\end{equation*}
It follows easily that $z_b^+ \in (0,1)$.

We have the following upper bound for the counting function of Zuckerman numbers.

\begin{theorem}\label{thm:upper-bound}
	We have that
	\begin{equation*}
		|\mathcal{Z}_b(x)| < x^{z_b^+ \,+\, o(1)} ,
	\end{equation*}
	as $x \to +\infty$.
\end{theorem}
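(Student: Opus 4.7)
The plan is to apply Lemma~\ref{lem:Zb-as-union-of-ZbD} to decompose $\mathcal{Z}_b$ into the finite union $\bigcup_{\mathcal{D} \in \Omega_b^*} \mathcal{Z}_{b,\mathcal{D}}$ and then reduce via Lemma~\ref{lem:from-N-to-x} to proving $|\mathcal{Z}_{b,\mathcal{D},N}| \leq b^{(z_{b,\mathcal{D}}^+ + o(1))N}$ for each fixed $\mathcal{D} \in \Omega_b^*$; taking the maximum over $\mathcal{D}$ then yields the theorem. Throughout, set $\alpha := \log|\mathcal{D}|/\log b$.

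For the $\mathcal{D}$-fixed bound, organize $|\mathcal{Z}_{b,\mathcal{D},N}|$ by the digit distribution $(N_d)$, writing $P := \prod_d d^{N_d}$. For each such tuple, bound the number of $n \in \mathcal{Z}_{b,\mathcal{D},N}$ realizing it in two complementary ways:
\begin{enumerate}
    \item by the multinomial coefficient $\binom{N}{(N_d)}$ counting arrangements;
    \item by a divisibility bound from a positional split $n = n_{\high} b^M + n_{\low}$ with $M := \lfloor \log_b P \rfloor$. Since $\gcd(P, b) = 1$ by Lemma~\ref{lem:simple-facts}, the congruence $P \mid n$ forces $n_{\low} \equiv -n_{\high} b^M \pmod{P}$, so each $n_{\high}$ admits only $\lceil b^M/P \rceil = O(1)$ valid $n_{\low}$. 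The $n_{\high}$ has $N - M$ digits lying in $\mathcal{D}$ apart from at most $W_b$ exceptions (costing a polynomial-in-$N$ factor), so the number of such $n_{\high}$ is at most $N^{O(1)}|\mathcal{D}|^{N-M}$. The choice of $M$ gives $|\mathcal{D}|^{N-M} = |\mathcal{D}|^N P^{-\alpha}$, and bound (ii) becomes $N^{O(1)}|\mathcal{D}|^N/P^{\alpha}$.
\end{enumerate}

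Summing the minimum of the two bounds over $(N_d)$ and invoking Lemma~\ref{lem:restricted-digits} yields $|\mathcal{Z}_{b,\mathcal{D},N}| \leq \exp((H_{\max} + o(1))N)$, where
\begin{equation*}
    H_{\max} := \max_{x} \min\!\Big(H(x),\ \log|\mathcal{D}| - \alpha \sum_{d \in \mathcal{D}} x_d \log d\Big),
\end{equation*}
with $x = (x_d)_{d \in \mathcal{D}}$ ranging over probability distributions. The supremum is attained where the two arguments coincide, i.e., on the hyperplane $H(x) + \alpha \sum_d x_d \log d = \log|\mathcal{D}|$. A standard Lagrange-multiplier argument (analogous to the proof of Lemma~\ref{lem:max-entropy}) forces $x_d \propto d^{-s}$ for a unique $s > 0$, and the active constraint becomes precisely the critical equation~\eqref{equ:critical-equation} with $v=1$. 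The unique solution $s = s_{b,\mathcal{D},1}$ guaranteed by Lemma~\ref{lem:critical-equation} then gives $H_{\max} = z_{b,\mathcal{D}}^+ \log b$, as required.

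The main obstacle is to execute the split at the precise level $M \approx \log_b P$ that produces the factor $P^{\alpha}$ (rather than $P$ or $P^1$) in bound~(ii): this factor is what encodes the savings from restricting the high-part digits to $\mathcal{D}$ and is exactly what sharpens Sanna's prior $x^{0.717}$ estimate to $x^{0.665}$ for $b = 10$. The other routine matters---the $W_b$-bounded digits outside $\mathcal{D}$, the $\lceil b^M/P \rceil = O(1)$ cushion from rounding, and the bookkeeping in Lemma~\ref{lem:restricted-digits} with $a_d = \log d$---contribute only polynomial-in-$N$ losses absorbed into the $o(1)$ in the final exponent.
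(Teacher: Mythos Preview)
Your argument is essentially the paper's own proof, repackaged: the paper fixes a threshold $P \lessgtr b^{\alpha N}$, bounds the large-$P$ regime by the positional/divisibility count $|\mathcal{D}|^{N-\ell}$ and the small-$P$ regime by the multinomial/entropy sum via Lemma~\ref{lem:restricted-digits}, and then chooses $s=s_{b,\mathcal{D},1}$ so that the two exponents $\beta$ and $\gamma$ coincide at $z_{b,\mathcal{D}}^+$. Your tuple-by-tuple $\min$ of the same two bounds, followed by a max--min over the simplex, is just the continuous version of that threshold optimization and lands on the same critical equation~\eqref{equ:critical-equation} with $v=1$.

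Two small corrections. First, Lemma~\ref{lem:simple-facts}\ref{ite:simple-facts:2} gives only $b\nmid P$, not $\gcd(P,b)=1$ (e.g.\ $b=10$, digit $2$); fortunately your bound~(ii) does not need coprimality at all, since $0\le n_{\low}<b^M\le P$ already forces at most one $n_{\low}$ in the residue class $-n_{\high}b^M \pmod P$. Second, Lemma~\ref{lem:restricted-digits} handles a \emph{linear} constraint $\sum a_d N_d\le cN$, not a $\min$ of entropy against an affine functional; what you actually need is Lemma~\ref{lem:multinomial} to convert each multinomial to $\exp(H(x)N+o(N))$, then the trivial bound (number of tuples)$\times$(largest term), and finally the convex-analysis fact that $\max_x\min(H(x),g(x))$ is attained where $H=g$ at the Gibbs point $x_d\propto d^{-s}$---which is the content of your Lagrange step.
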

\begin{proof}
    In light of Lemma~\ref{lem:Zb-as-union-of-ZbD} and Lemma~\ref{lem:from-N-to-x}, it suffices to prove that
	\begin{equation}\label{equ:upper-bound-ZbD}
		|\mathcal{Z}_{b,\mathcal{D},N}| < b^{(z_{b,\mathcal{D}}^+ \,+\, o(1))N} ,
	\end{equation}
	as $N \to +\infty$, for every $\mathcal{D} \in \Omega_b^*$.
	Pick an arbitrary $\mathcal{D} \in \Omega_b^*$.
    Let $s > 0$ be a constant to be defined later (depending only on $b$ and $\mathcal{D}$), and let
    \begin{equation}\label{equ:alpha-beta-gamma}
        \alpha := -\frac{\zeta_{\mathcal{D}}^\prime(s)}{ \zeta_{\mathcal{D}}(s) \log b},
        \quad \beta := \frac{\log\!|\mathcal{D}|}{\log b} (1 - \alpha) ,
        \quad\text{ and }\quad \gamma := \alpha s +     \frac{\log\!\big(\zeta_{\mathcal{D}}(s)\big)}{\log b}.
    \end{equation}
	For every integer $N \geq 1$, define the sets
	\begin{equation*}
		\mathcal{Z}_{b,\mathcal{D},N}^\prime := \big\{ n \in \mathcal{Z}_{b,\mathcal{D},N} : p_b(n) > b^{\alpha N} \big\}
	\end{equation*}
	and $\mathcal{Z}_{b,\mathcal{D},N}^{\prime\prime} := \mathcal{Z}_{b,\mathcal{D},N} \setminus \mathcal{Z}_{b,\mathcal{D},N}^\prime$.
	Thus $\mathcal{Z}_{b,\mathcal{D},N}^\prime$, $\mathcal{Z}_{b,\mathcal{D},N}^{\prime\prime}$ is a partition of $\mathcal{Z}_{b,\mathcal{D},N}$.

	Pick $n \in \mathcal{Z}_{b,\mathcal{D},N}$.
	For the sake of brevity, put $N_d := w_{b,d}(n)$ for each $d \in \{1, \dots, b - 1\}$.
	Hence, we have that $n$ has exactly $N$ digits, and $N_1 + \cdots + N_{b-1} = N$ (recall Lemma~\ref{lem:simple-facts}\ref{ite:simple-facts:1}).
	Moreover, recalling the definition of $\mathcal{Z}_{b,\mathcal{D}}$, we get that $N_d < W_b$ for every $d \in \mathcal{D}^c$, where $\mathcal{D}^c := \{1,\dots,b-1\} \setminus \mathcal{D}$.

	First, suppose that $n \in \mathcal{Z}_{b,\mathcal{D},N}^\prime$.
	Let $\ell$ be the unique integer such that $b^\ell \leq p_b(n) < b^{\ell + 1}$.
	Since $p_b(n) > b^{\alpha N}$, it follows that $\ell > \alpha N - 1$.
	Moreover, since $p_b(n)$ divides $n$, we have that $b^\ell \leq p_b(n) \leq n < b^N$, so that $\ell < N$.
	Recalling that $N_d < W_b$ for every $d \in \mathcal{D}^c$, we get that the number of possible choices for the $N - \ell$ most significant digits of $n$ is at most
	\begin{equation*}
		\sum_{j \,=\, 0}^{(W_b - 1)|\mathcal{D}^c|} \binom{N - \ell}{j} \, |\mathcal{D}^c|^j \, |\mathcal{D}|^{N - \ell - j} = N^{O(1)} |\mathcal{D}|^{N - \ell} < b^{(\beta + o(1)) N} ,
	\end{equation*}
	as $N \to +\infty$.
	Furthermore, since $b^\ell \leq p_b(n)$ and $p_b(n)$ divides $n$, we get that for each of the previous choices there is at most one choice for the $\ell$ least significant digits of $n$ (a similar idea is used in the proof of \cite[Lemma~2]{MR2046944}).
	Therefore, we obtain that
	\begin{equation}\label{equ:upper-bound-ZbD1}
		|\mathcal{Z}_{b,\mathcal{D},N}^\prime| < b^{(\beta + o(1)) N} ,
	\end{equation}
	as $N \to +\infty$.

	Suppose that $n \in \mathcal{Z}_{b,\mathcal{D},N}^{\prime\prime}$.
	Since $p_b(n) \leq b^{\alpha N}$, we have that $\sum_{d \in \mathcal{D}} (\log d) N_d \leq (\alpha \log b) N$.
	Moreover, by elementary combinatorics, for fixed values of $N_1, \dots, N_{b-1}$ there are at most
	\begin{equation*}
		\frac{N!}{N_1! \cdots N_{b-1}!}
	\end{equation*}
	possible values of $n$.
	Therefore, we can apply Lemma~\ref{lem:restricted-digits} with $a_d = \log d$ for $d \in \mathcal{D}$, $c = \alpha \log b$, $C = W_b - 1$, and $\lambda = -s$ (as a consequence of the definition of $\alpha$, and also recalling Remark~\ref{rmk:converse}).
	This yields
	\begin{equation}\label{equ:upper-bound-ZbD2}
		|\mathcal{Z}_{b,\mathcal{D},N}^{\prime\prime}| < b^{(\gamma + o(1)) N} ,
	\end{equation}
	as $N \to +\infty$.
    
    At this point, by choosing $s = s_{b,\mathcal{D},1}$, from~\eqref{equ:critical-equation} we get that $\beta = \gamma = z_{b,\mathcal{D}}^+$.
	Therefore, putting together \eqref{equ:upper-bound-ZbD1} and \eqref{equ:upper-bound-ZbD2}, we obtain~\eqref{equ:upper-bound-ZbD}, as desired.
\end{proof}

\section{Lower bound for base 10}\label{sec:lower-bound-10}

We have the following lower bound for the counting function of base-$10$ Zuckerman numbers.

\begin{theorem}\label{thm:lower-bound-10}
    We have that
    \begin{equation*}
        |\mathcal{Z}_{10}(x)| > x^{0.204}
    \end{equation*}
    as $x \to +\infty$.
\end{theorem}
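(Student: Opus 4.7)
The plan is to construct explicitly a large family of $N$-digit base-$10$ Zuckerman numbers and then convert the count to an $x$-bound via Lemma~\ref{lem:from-N-to-x}. I would restrict to the digit set $\mathcal{D} = \{1, 3, 7, 9\} \in \Omega_{10}^*$, since any product of such digits is coprime to $10$. For an $N$-digit number $n$ whose digits all lie in $\mathcal{D}$ with $N_d$ copies of each $d \in \mathcal{D}$ (summing to $N$), the product $P := p_{10}(n) = 3^{N_3 + 2N_9} \cdot 7^{N_7}$ depends only on the multiset $(N_d)_d$, so the Zuckerman property reduces to the pure congruence condition $P \mid n$ modulo $P$.

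The central quantitative step is an equidistribution estimate: for a fixed multiset, at least a $(1+o(1))/P$ fraction of the $\binom{N}{N_1, N_3, N_7, N_9}$ arrangements of the multiset satisfies $P \mid n$. I would establish this by bounding, for each non-trivial additive character $\chi$ of $\mathbb{Z}/P\mathbb{Z}$, the exponential sum $\sum_\sigma \chi(n_\sigma)$ over arrangements $\sigma$; the crucial structural input is $\gcd(P,10)=1$, which makes $10$ invertible modulo $P$. Combined with Lemma~\ref{lem:multinomial}, this yields a lower bound of the form $\exp\!\bigl((H(\vec x) - (x_3 + 2x_9)\log 3 - x_7 \log 7) N + o(N)\bigr)$, where $x_d := N_d/N$. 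By Lemma~\ref{lem:max-entropy} (equivalently, by Lagrange multipliers applied to $\sum_d x_d = 1$), the optimal choice is $x_d \propto 1/d$, giving maximal exponent $\log \zeta_\mathcal{D}(1) = \log(100/63)$, hence $|\mathcal{Z}_{10,N}| \geq b^{(\log_{10}(100/63) - o(1)) N}$, i.e.\ exponent $\approx 0.2007$.

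Since $0.2007$ falls just short of the stated $0.204$, the final step of the plan is to sharpen this construction. Two natural refinements I would pursue are: (i) exploiting the fact that divisibility by $3$ and $9$ is determined purely by the digit sum (so for multisets with digit sum divisible by $9$, those two divisibility constraints come for free, giving a constant-factor gain that may be leveraged by combining several nearby multisets), and (ii) enlarging the admissible family by including also Zuckerman numbers whose digit set is $\{1,3,5,7,9\}$ and whose trailing block is fixed to a specific pattern (such as $\ldots 375$ or $\ldots 9375$) carrying the necessary powers of $5$, then optimizing the entropic trade-off over this hybrid construction. The main obstacle is the rigorous equidistribution estimate modulo $P$: since $P$ grows exponentially with $N$, the $P-1$ non-principal characters must be controlled uniformly, which demands genuine cancellation in character sums indexed by permutations of a fixed multiset—a classical but technically delicate type of estimate.
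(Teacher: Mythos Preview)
Your proposal follows a genuinely different route from the paper, and in its current form it has two real gaps.

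\textbf{First gap: the equidistribution step is not established.} You correctly identify this as ``the main obstacle,'' but you do not indicate how to overcome it, and it is far from routine. The modulus $P = 3^{N_3+2N_9}7^{N_7}$ grows like $e^{cN}$, so you need square-root (or at least power-saving) cancellation in a character sum over permutations of a fixed multiset, uniformly over all nontrivial characters modulo $P$. Such sums do \emph{not} factor over digit positions once the multiset is fixed, and I am not aware of an off-the-shelf result that delivers this. Without it, the entire counting argument collapses. (Incidentally, $\{1,3,7,9\}$ lies in $\Omega_{10}$ but not in $\Omega_{10}^*$; the maximal sets are $\{1,3,5,7,9\}$ and $\{1,2,3,4,6,7,8,9\}$.)

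\textbf{Second gap: the target exponent is not reached.} Even granting perfect equidistribution, your construction yields $\log_{10}(100/63) = 0.2007\ldots$, which is strictly below $0.204$. Your refinement~(i) gains only a bounded factor (restricting the digit sum modulo $9$ costs and saves the same constant), so it cannot move the exponent. Refinement~(ii) is only a sketch; you would need to carry out the optimization and, more seriously, still prove equidistribution for the enlarged family.

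\textbf{What the paper does instead.} The paper's proof is entirely constructive and avoids equidistribution altogether. It works with the digit set $\{1,2,4,8\}$, so that $p_{10}(n) = 2^{\nu(n)}$ is always a power of $2$. The key step is to build, by induction on $k$, an integer $n_k$ with $k\ell$ digits in $\{1,2,4,8\}$ such that $2^{k\ell}\mid n_k$ while $\nu(n_k)\le \delta k\ell$; the parameter $\delta = 1/2$ is obtained from a finite computer search at block length $\ell = 24$. One then writes $n = 10^{N_\low} n_\high + n_k$ with $N_\low = k\ell$ and lets $n_\high$ range over $\{1,2,4,8\}^{N_\high}$ subject to a bound on $\nu(n_\high)$; every such $n$ is automatically Zuckerman because $2^{\nu(n)} \mid 2^{N_\low} \mid n$. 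Counting the admissible $n_\high$ via Lemma~\ref{lem:restricted-digits} with $\alpha = 1/2$ gives the exponent $0.204$. The trade-off: the paper's method is elementary and rigorous but relies on a nontrivial computational input, whereas your approach would be conceptually cleaner and would actually recover the heuristic exponent $z_{10,\mathcal{D}}$ of Section~\ref{sec:heuristic}---if the equidistribution could be proved.
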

\begin{proof}
    In light of Lemma~\ref{lem:from-N-to-x}, it suffices to prove that
    \begin{equation*}
        |\mathcal{Z}_{10,N}| > 10^{0.204 N} ,
    \end{equation*}
    as $N \to +\infty$.
    For each integer $n \geq 0$, let $\nu(n)$ be the maximum integer $v$ such that $2^v$ divides $p_{10}(n)$ (where $p_{10}(0) := 1$), and let $\mathcal{A}_n$ be the set of $n$-digit numbers whose digits belong to $\{1, 2, 4, 8\}$.
    Let $\ell$ be a positive integer to be determined later, and put
    \begin{equation}\label{equ:min-max-def}
        \delta := \frac1{\ell} \max_{0 \,\leq\, x \,<\, 2^\ell} \min_{\substack{y \,\in\, \mathcal{A}_\ell \\ y \,\equiv\, x \!\!\!\pmod {2^{\ell}}}} \nu(y) ,
    \end{equation}
    where $\min \varnothing := +\infty$.
    Assume that $\delta < 1$.
    Let us prove that for each integer $k \geq 0$ there exists $n_k \in \mathcal{A}_{k\ell}$ such that $2^{k\ell}$ divides $n_k$ and $\nu(n_k) \leq \delta k \ell$.
    We proceed by induction.
    For $k = 0$ it suffices to put $n_0 := 0$.
    Suppose that we proved the existence of $n_k$ for some integer $k \geq 0$.
    Let us construct $n_{k+1}$.
    By the induction hypothesis, we have that $m_k := n_k / 2^{k\ell}$ is an integer.
    Moreover, by definition~\eqref{equ:min-max-def}, there exists $y_k \in \mathcal{A}_\ell$ such that $y_k \equiv -5^{-k\ell} m_k \pmod {2^\ell}$ and $\nu(y_k) \leq \delta \ell$.
    Put $n_{k + 1} := 10^{k\ell} y_k + n_k$.
    It is clear that $n_{k + 1} \in \mathcal{A}_{(k+1)\ell}$.
    Furthermore, we have that $n_{k + 1} \equiv 2^{k\ell} (5^{k\ell} y_k + m_k) \equiv 0 \pmod {2^{(k+1)\ell}}$ and $\nu(n_{k+1}) = \nu(y_k) + \nu(n_k) \leq \delta \ell + \delta k\ell = \delta(k+1)\ell$, as desired.
    The proof of the claim is complete.

    Let $N$ be a sufficiently large integer, and put $k := \lceil \alpha N / \ell \rceil$, $N_\low := k\ell$, and $N_\high := N - N_\low$, where $\alpha \in (0, 1)$ is a constant to be determined later.
    Suppose that $n = 10^{N_\low} n_\high + n_k$, where $n_\high \in \mathcal{A}_{N_\high}$ satisfies
    \begin{equation*}
        \nu(n_\high) \leq \frac{\alpha(1 - \delta)}{1 - \alpha} N_{\high} .
    \end{equation*}
    Then we have that $n \in \mathcal{A}_N$ and
    \begin{equation*}
        \nu(n) = \nu(n_\high) + \nu(n_k) \leq \frac{\alpha(1 - \delta)}{1 - \alpha} N_{\high} + \delta N_\low \leq (1 - \delta) N_\low + \delta N_\low = N_\low .
    \end{equation*}
    Consequently, recalling that $2^{N_\low}$ divides $n_k$, and since $p_{10}(n) = 2^{\nu(n)}$, we get that $p_{10}(n)$ divides $n$, so that $n \in \mathcal{Z}_{10,N}$.

    At this point, setting $\mathcal{D} = \{1, 2, 4, 8\}$, $a_1 = 0$, $a_2 = 1$, $a_4 = 2$, $a_8 = 3$, $c = \alpha(1 - \delta)/(1 - \alpha)$, and $C = 0$, the number of possible choices for $n_\high$ can be estimated by Lemma~\ref{lem:restricted-digits} (if the hypothesis on $a_1,a_2,a_4,a_8,c$ are satified), and it is equal to, say,
    \begin{equation*}
        \exp\!\big((H_{\alpha,\delta} + o(1)) N_\high \big) = \exp\!\big(((1 - \alpha) H_{\alpha,\delta} + o(1)) N \big) = 10^{((1 - \alpha) H_{\alpha,\delta} /\! \log{10} + o(1))N}.
    \end{equation*}
    Finally, we run the numbers.
    Picking $\ell = 24$, a computation~\cite{Repo} (see Remark~\ref{rmk:delta-computation}) shows that $\delta = 1/2$.
    Then the choice $\alpha = 1/2$ yields that $H_{\alpha,\delta} > 0.940$ and
    \begin{equation*}
        \frac{(1 - \alpha) H_{\alpha,\delta}}{\log 10} > 0.204 ,
    \end{equation*}
    as desired.
\end{proof}

\begin{remark}
    The proof of Theorem~\ref{thm:lower-bound-10} is similar to the proof of the lower bound of \cite[Theorem~1(i)]{MR2298113}, but with a significant difference. 
    Instead of the max-min approach employing~\eqref{equ:min-max-def}, in~\cite{MR2298113} the sequence $(n_k)_{k \geq 0}$ is constructed by an inductive process that allows one to make some choices. 
    Then it is shown that the choices leading to the worst-case scenario do not happen too often. 
    This provided a worse parameter $\delta = 3/4$. 
    Also, as a minor difference, we constructed $(n_k)_{k \geq 0}$ by employing the digits $1,2,4,8$, while~\cite{MR2298113} only uses the digits $1,2,4$.
\end{remark}

\begin{remark}\label{rmk:delta-computation}
    We have that $24$ is the smallest value of $\ell$ for which $\delta \leq 1/2$, and we did not find a value of $\ell$ for which $\delta < 1/2$ (we tried up to $\ell = 28$).
    Note that the search space to compute $\delta$ has $|\mathcal{A}_\ell| = 4^{\ell}$ elements, which for $\ell = 24$ is already impractical for a brute-force search.
    However, employing some basic pruning techniques allowed us to complete the computation within a few seconds.
\end{remark}

\begin{remark}
    The strategy of the proof of Theorem~\ref{thm:lower-bound-10} might generalize to other composite values of $b$.
    The idea would be to fix a prime factor $q$ of $b$, let $\mathcal{A}_n$ be the set of $n$-digit numbers whose digits are all powers of $q$, let $\nu(n)$ be the maximum integer $v$ such that $q^v$ divides $p_b(n)$, and let $\delta$ be defined as in~\eqref{equ:min-max-def}, but with $2^\ell$ replaced by $q^\ell$. 
    If $\delta < 1$ for a sufficiently large $\ell$, then one gets a lower bound for $|\mathcal{Z}_b(x)|$.
\end{remark}

\section{Heuristic}\label{sec:heuristic}

For every $\mathcal{D} \in \Omega_b^*$, let
\begin{equation*}
	z_{b,\mathcal{D}} := \frac{\log\!\big(\zeta_{\mathcal{D}}(1)\big)}{\log b} , \quad\text{ and let }\quad z_b := \max_{\mathcal{D} \,\in\, \Omega_b^*} z_{b,\mathcal{D}} .
\end{equation*}
In fact, in light of Remark~\ref{rmk:Omega-b-star}, it follows easily that
\begin{equation*}
	z_b = \frac1{\log b} \log\!\Big(\sum_{1 \,\leq\, d \,<\, b, \; P^+(b) \,\nmid\, d} \frac1{d} \Big) ,
\end{equation*}
where $P^+(b)$ denotes the greatest prime factor of $b$.

In this section, we describe a heuristic suggesting that, for each $\mathcal{D} \in \Omega_b^*$, we have that
\begin{equation}\label{equ:heuristic-ZbDN}
	|\mathcal{Z}_{b,\mathcal{D},N}| = b^{(z_{b,\mathcal{D}} + o(1))N} ,
\end{equation}
as $N \to +\infty$, and consequently, by Lemma~\ref{lem:Zb-as-union-of-ZbD} and Lemma~\ref{lem:from-N-to-x}, that
\begin{equation*}
	|\mathcal{Z}_{b}(x)| = x^{z_b + o(1)} ,
\end{equation*}
as $x \to +\infty$.

The heuristic suggesting \eqref{equ:heuristic-ZbDN} works as follows.
Fix $\mathcal{D} \in \Omega_b^*$, let $N \geq 1$ be an integer, and let $n$ be a positive integer whose representation has exactly $N$ digits.
In light of Lemma~\ref{lem:simple-facts}\ref{ite:simple-facts:1}, suppose that $0 \notin \mathcal{D}_b(n)$, and put $N_d := w_{b,d}(n)$ for each $d \in \{1, \dots, b - 1\}$.
Hence, we have that $N = \sum_{d = 1}^{b-1} N_d$.
For fixed values of $N_1, \dots, N_{b-1}$, the number of possible values of $n$ is equal to
\begin{equation*}
	\frac{N!}{\prod_{d=1}^{b-1} (N_d!)} .
\end{equation*}
Each of these $n$'s is a Zuckerman number if and only if it is divisible by $p_b(n) = \prod_{d=1}^{b-1} d^{N_d}$.
Heuristically, the probability that this occurs is equal to $1/p_b(n)$.
Hence, the expected number of Zuckerman numbers among the aforementioned $n$'s is equal to
\begin{equation}\label{equ:heuristic-addend}
	\frac{N!}{\prod_{d=1}^{b-1} \big((N_d!) d^{N_d}\big)} .
\end{equation}
At this point, to (heuristically) compute $|\mathcal{Z}_{b,\mathcal{D},N}|$, we sum~\eqref{equ:heuristic-addend} over all the values of $N_1, \dots, N_{b-1}$ with $N_d < W_b$ for each $d \in \mathcal{D}^c := \{1, \dots, b - 1\} \setminus \mathcal{D}$.
Letting $N_{\mathcal{D}} := \sum_{d \in \mathcal{D}} N_d$, this yields that
\begin{align*}
	|\mathcal{Z}_{b,\mathcal{D},N}| &= \sum \frac{N!}{\prod_{d=1}^{b-1} \big((N_d!) d^{N_d}\big)} \\
	&= \sum_{N_{\mathcal{D}} \,=\, 1}^N \; \sum_{\substack{\sum_{d \in \mathcal{D}} N_d \,=\, N_{\mathcal{D}}}} \frac{N_{\mathcal{D}}!}{\prod_{d \in \mathcal{D}} \big((N_d!) d^{N_d}\big)}
	\cdot \sum_{\substack{N_d \,<\, W_b \\[1pt] d \,\in\, \mathcal{D}^c}} \frac{\prod_{k=1}^{N_{\mathcal{D}^c}} (N_{\mathcal{D}} + k)}{\prod_{d \in \mathcal{D}^c} \big((N_d!) d^{N_d}\big)} \\
	&= \sum_{N_{\mathcal{D}} \,=\, 1}^N \big(\zeta_{\mathcal{D}}(1)\big)^{N_{\mathcal{D}}}
	\cdot b^{o(1) N} = b^{(z_{b,\mathcal{D}} + o(1)) N} ,
\end{align*}
as $N \to +\infty$, where in the last equality we employed the multinomial theorem.
Thus \eqref{equ:heuristic-ZbDN} is proved.

\section{Algorithms}\label{sec:algorithms}

In this section, we describe some algorithms to count, or to enumerate, the Zuckerman numbers that have exactly $N$ digits.
The first three algorithms (Sections~\ref{sec:brute-force}, \ref{sec:enum-multiples}, and \ref{sec:meet-in-the-middle}) are asymptotically worse than the last two (Sections~\ref{sec:improved-algorithm} and~\ref{sec:algorithm-10}).
Hence, we only give a brief description of them, without diving into the details, and we omit possible improvements coming from Lemma~\ref{lem:Zb-as-union-of-ZbD}.

\subsection{Brute force}\label{sec:brute-force}

Of course, the simplest algorithm proceeds by brute force.
Each integer with $N$ digits, which are all nonzero (Lemma~\ref{lem:simple-facts}\ref{ite:simple-facts:1}), is tested to determine if it is a Zuckerman number or not.
The complexity of this algorithm is of the order of $(b-1)^N$.

\subsection{Enumerating multiples}\label{sec:enum-multiples}

First, the algorithm runs over the possible values $P$ of the product of $N$ digits such that $P > b^{\alpha N}$, where $\alpha > 0$ is a constant.
For each $P$, the algorithm runs over the multiples of $P$ that have $N$ digits, checking if each of them has digits consistent with the product $P$, and thus is a Zuckerman number.
Second, the algorithms runs over the $N$-digit numbers $n$ such that $p_b(n) \leq b^{\alpha N}$, and determines which of them are Zuckerman numbers.

The complexity of the first part if $b^{(1 - \alpha + o(1)) N}$, while (with reasonings similar to those leading to~\eqref{equ:upper-bound-ZbD2}), the complexity of the second part is $b^{(\delta + o(1))N}$, where
\begin{equation*}
    \delta := \alpha s + \frac{\log\!\big(\zeta_b(s)\big)}{\log b} , \quad\text{ with }\quad \zeta_b(s) := \zeta_{\{1,\dots,b-1\}}(s) ,
\end{equation*}
and $s > 0$ is arbitrary.
The optimal choice for $\alpha$ and $s$ is taking $s$ to be the unique solution of the equation
\begin{equation*}
    \frac{(s + 1)\zeta_b^\prime(s)}{\zeta_b(s)}- \log\!\left(\frac{\zeta_b(s)}{b}\right) = 0
\end{equation*}
(see \cite[Theorem~2.2]{MR4181552}) and setting
\begin{equation*}
    \alpha = \frac{\log\!\big(b / \zeta_b(s)\big)}{(s + 1)\log b} .
\end{equation*}
In particular, for $b = 10$ the algorithm has complexity of the order of $10^{0.717 N}$.

This algorithm can also be easily modified to enumerate Zuckerman numbers, which increases the complexity by an additive term $|\mathcal{Z}_{b,N}|$.

\subsubsection{Dynamic programming}

It is also possible to count the Zuckerman numbers $n$ such that $p_b(n) \leq b^{\alpha N}$ by using a dynamic programming approach.
Let $f[p][i][\mathcal{S}]$ denote the number of $n$ such that $n \equiv i \pmod p$ and the multiset of digits of $n$ is equal to $\mathcal{S}$.
Then $f[p][\,\cdots][\mathcal{T}]$, with $|\mathcal{T}| = k + 1$, can be computed from $f[p][\,\cdots][\mathcal{S}]$, with $|\mathcal{S}| = k$, by initializing it with zeros and then adding $f[p][i][\mathcal{S}]$ to
\begin{equation*}
    f[p][(ib+d)\bmod p][\mathcal{S}\cup \{d\}]
\end{equation*}
for all the possible values of $i$, $d$, and $\mathcal{S}$. 
(This corresponds to adding a new least significant digit $d$.)
Finally, we only need to compute
\begin{equation}\label{equ:dynamic-sum}
    \sum_{p \,\leq\, b^{\alpha N}, \, |\mathcal{S}| \,=\, N, \, \prod_{v\in \mathcal{S}} v \,=\, p} f[p][0][\mathcal{S}] .
\end{equation}
Since the number of $\mathcal{S}$'s that we have to consider is negligible (of the order of $b^{o(N)}$), the complexity of computing~\eqref{equ:dynamic-sum} is of the order of $b^{(\alpha + o(1)) N}$.
By setting $\alpha=1/2$, we get an overall complexity of $b^{(1/2 + o(1))N}$.

\subsection{Meet in the middle}\label{sec:meet-in-the-middle}

This algorithm follows a meet-in-the-middle approach.
Letting $N_\low := \lfloor N / 2 \rfloor$ and $N_\high := N - N_\low$, each $N$-digit number $n$ is written as $n = n_\high \, b^{N_\low} + n_\low$, where $n_\high$ and $n_\low$ are $N_\high$-digit and $N_\low$-digit numbers, respectively.
Hence, we have that $n$ is a Zuckerman number if and only if $n_\low \equiv -n_\high \, b^{N_\low} \pmod P$, where $P := P_\high P_\low$, $P_\high := p_b(n_\high)$, and $P_\low := p_b(n_\low)$.

The algorithm runs over the possible values $P_\high$ and $P_\low$ of the product of $N_\high$ digits and $N_\low$ digits, respectively, and for each pair $(P_\high, P_\low)$ does the following computation.
First, it builds a table $T[r]$, with $r=0,\dots,P-1$, such that $T[r]$ is equal to the number of $n_\high$ satisfying $p_b(n_\high) = P_\high$ and $r \equiv -n_\high \, b^{N_\low} \pmod P$.
Second, for each $n_\low$ with $p_b(n_\low) = P_\low$ it increases the counter of Zuckerman numbers by $T[n_\low \bmod P]$.

The complexity of this algorithm is of the order of $b^{(1/2 + o(1))N}$.
Also, by storing $n_\high$ in the table $T$, the algorithm can be easily modified to enumerate Zuckerman numbers.
This increases the complexity to $b^{(1/2 + o(1))N} + |\mathcal{Z}_{b,N}|$.

\subsection{An improved algorithm}\label{sec:improved-algorithm}

The strategy of this algorithm, which we call $\textsf{ZuckermanCount}$, is the following.
Let $n$ be an integer with $N$ digits, and let $N_d := w_{b,d}(n)$ for $d=0,\dots,b - 1$.
We assume that $N_0 = 0$, by Lemma~\ref{lem:simple-facts}\ref{ite:simple-facts:1}.
The first part of the algorithm (Figure~\ref{fig:zuckerman-count}) runs over the possible values of $N_0, \dots, N_{b-1}$, taking into account the restriction given by Lemma~\ref{lem:Zb-as-union-of-ZbD}, and computes the product of digits $P := \prod_{d = 1}^{b - 1} d^{N_d}$.
Then, depending if $P > b^{\alpha N}$ or not, where $\alpha > 0$ is a constant defined in the proof of Theorem~\ref{thm:counting-complexity}, the subroutine $\textsf{LargeProduct}$ (Figure~\ref{fig:large-product}) or $\textsf{SmallProduct}$ (Figure~\ref{fig:small-product}) is called, respectively.

The subroutine $\textsf{LargeProduct}$ counts the number of $n$'s that are divisible by $P$ by following the same strategy of the first part of the proof of Theorem~\ref{thm:upper-bound}.
More precisely, $\textsf{LargeProduct}$ runs over the possible values of the most significant $N - \ell$ digits of $n$, where $\ell := \lfloor \log P /\! \log b \rfloor$, and uniquely determines the remaining $\ell$ digits of $n$ by the condition that $P$ divides $n$.

The subroutine $\textsf{SmallProduct}$ counts the number of $n$'s that are divisible by $P$ by using a meet-in-the-middle approach similar to that of Section~\ref{sec:meet-in-the-middle}.
The idea is the following.
Let $n_\high$ and $n_\low$ be the unique integers such that $n = n_\high \, b^{N_\low} + n_\low$, $1 \leq n_\high < b^{N_\high}$, and $1 \leq n_\low < b^{N_\low}$, where $N_\high$ is an integer to be defined later and $N_\low := N - N_\high$.
Then $P$ divides $n$ if and only if $n_\low \equiv -n_\high \, b^{N_\low} \pmod P$.
Hence, $\textsf{SmallProduct}$ first builds a table $T[r]$, with $r=0,\dots,P-1$, such that $T[r]$ is equal to the number of $n_\high$'s satisfying $r \equiv -n_\high \, b^{N_\low} \pmod P$.
Then, $\textsf{SmallProduct}$ runs over the possible values of $n_\low$, and increases the counter of Zuckerman numbers by $T[n_\low \bmod P]$.

The choice of $N_\high$ is made so that the number of values of $n_\low$ and the number of values of $n_\high$ are both of the order of $M^{1/2}$, where $M := N! / (N_0! \cdots N_{b-1}!)$.
This requires some considerations on what we call the \emph{separating index} $\sigma_b(n)$.
For each integer $n$, whose digits are all nonzero, let $\pi_b(n)$ be the number of integers that can be obtained by permuting the digits of $n$.
For every positive integer $i \leq N$, let $n[\colon \!i]$ denote the $i$-digit integer consisting of the $i$ most significant digits of $n$.
We define $\sigma_b(n)$ as the minimum positive integer $i$ such that $\pi_b\!\big(n[\colon \!i]\big) \geq M^{1/2}$.
It follows easily that $\sigma_b(n)$ is well defined and satisfies $M^{1/2} \leq \pi_b\!\big(n[\colon \!\sigma_b(n)]\big) < N M^{1/2}$, since adding one digit increases the number of permutations by a factor at most equal to $N$.
The algorithm constructs $N_\high$ so that, a posteriori, we have that $N_\high = \sigma_b(n)$.
This requires to discard the integers $n$ such that $\pi_b\!\big(n[\colon \! (N_\high - 1)]\big) \geq M^{1/2}$.
In this way, the number of possible values for $n_\high$ and $n_\low$ are not exceeding $NM^{1/2}$ and $M^{1/2}$, respectively.

By storing $n_\high$ in the table $T$, the algorithm can be easily modified to enumerate Zuckerman numbers.
This increases the complexity by an additive term $|\mathcal{Z}_{b,N}|$.

\subsubsection{Complexity analysis}

For every $\mathcal{D} \in \Omega_b^*$, if $b \in \{3,4,5\}$ then let $s_{b,\mathcal{D},2} := 0$, while if $b \geq 6$ then, in light of Lemma~\ref{lem:critical-equation}, let $s_{b,\mathcal{D},2}$ be the unique solution to \eqref{equ:critical-equation} with $v=2$.
Then define
\begin{equation*}
	z_{b,\mathcal{D}}^* := \frac{\log|\mathcal{D}|}{\log b} \left(1 + \frac{\zeta_{\mathcal{D}}^\prime(s_{b,\mathcal{D},2})}{\zeta_{\mathcal{D}}(s_{b,\mathcal{D},2})\log b}\right) \quad\text{ and }\quad z_b^* := \max_{\mathcal{D} \,\in\, \Omega_b^*} z_{b,\mathcal{D}}^* .
\end{equation*}
It follows easily that $z_b^* \in (0,1)$.

\begin{theorem}\label{thm:counting-complexity}
    The algorithm~$\textsf{ZuckermanCount}$ (Figure~\ref{fig:zuckerman-count}) has complexity $b^{(z_b^* + o(1))N}$, as $N \to +\infty$.
\end{theorem}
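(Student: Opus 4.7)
The strategy is to decompose the workload of $\textsf{ZuckermanCount}$ by the choice of $\mathcal{D} \in \Omega_b^*$ (a finite set by Remark~\ref{rmk:Omega-b-star}) and by the digit-count vector $(N_1,\dots,N_{b-1})$ processed by the outer loop. The restriction $N_d < W_b$ for $d \in \mathcal{D}^c$ leaves at most $N^{O(1)}$ such vectors, so up to polynomial factors in $N$ the complexity is controlled by the worst-case work per vector. Fix $\mathcal{D}$ and such a vector, and set $P := \prod_{d=1}^{b-1} d^{N_d}$, $\ell := \lfloor \log P/\log b\rfloor$, and $M := N!/(N_1!\cdots N_{b-1}!)$.

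For the $\textsf{LargeProduct}$ branch (triggered when $P > b^{\alpha N}$ for the algorithm's threshold $\alpha$), I would reuse the prefix-count from the large-product half of the proof of Theorem~\ref{thm:upper-bound}: the restriction on digits outside $\mathcal{D}$ bounds the number of most-significant prefixes by $N^{O(1)}|\mathcal{D}|^{N-\ell}$, and each prefix triggers $N^{O(1)}$ arithmetic, so the per-vector cost is $b^{(e_1+o(1))N}$ with $e_1 := (1-\alpha)\log|\mathcal{D}|/\log b$. For the $\textsf{SmallProduct}$ branch (triggered when $P \leq b^{\alpha N}$), the separating-index construction gives at most $NM^{1/2}$ high halves and $M^{1/2}$ low halves, and the lookup table, stored only at the residues actually produced, has at most $NM^{1/2}$ entries, yielding a per-vector cost of $N^{O(1)}M^{1/2}$. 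Applying Lemma~\ref{lem:restricted-digits} with $a_d = \log d$ for $d \in \mathcal{D}$, $c = \alpha\log b$, and $C = W_b - 1$ produces $M \leq b^{(H_{\max}/\log b + o(1))N}$ where $H_{\max} = \alpha s\log b + \log\zeta_\mathcal{D}(s)$ and $s \geq 0$ is the Lagrange parameter fixed by $\alpha = -\zeta_\mathcal{D}'(s)/(\zeta_\mathcal{D}(s)\log b)$; when the constraint is inactive (in particular for $b \in \{3,4,5\}$) one takes $s = 0$ and uses the trivial bound $M \leq |\mathcal{D}|^N$. Hence the $\textsf{SmallProduct}$ cost is $b^{(e_2 + o(1))N}$ with $e_2 := \alpha s/2 + \log\zeta_\mathcal{D}(s)/(2\log b)$.

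Combining the two branches yields a per-$\mathcal{D}$ total of $b^{(\max(e_1,e_2) + o(1))N}$. By construction, $e_1 = z_{b,\mathcal{D}}^*$ at $s = s_{b,\mathcal{D},2}$. For $b \geq 6$, I would check that equation~\eqref{equ:critical-equation} with $v = 2$, rewritten as $\log\zeta_\mathcal{D}(s) - 2\log|\mathcal{D}| = (s + 2\log|\mathcal{D}|/\log b)\,\zeta_\mathcal{D}'(s)/\zeta_\mathcal{D}(s)$ and divided by $2\log b$, is equivalent to $e_1 = e_2$; this is the exact counterpart of the $v=1$ manipulation underlying $\beta = \gamma$ in the proof of Theorem~\ref{thm:upper-bound}. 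For $b \in \{3,4,5\}$, with $s_{b,\mathcal{D},2} = 0$ we have $e_2 = \log|\mathcal{D}|/(2\log b)$, and $e_2 \leq e_1$ reduces to $\alpha \leq 1/2$, which holds because the remark following Lemma~\ref{lem:strange-bound} records that \eqref{equ:strange-bound1} fails for those $b$, giving $\alpha = -\zeta_\mathcal{D}'(0)/(\zeta_\mathcal{D}(0)\log b) = (\sum_{d \in \mathcal{D}}\log d)/(|\mathcal{D}|\log b) \leq 1/2$. In either case the per-$\mathcal{D}$ bound is $b^{(z_{b,\mathcal{D}}^* + o(1))N}$, and maximizing over the finitely many $\mathcal{D} \in \Omega_b^*$ gives $b^{(z_b^* + o(1))N}$.

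The main obstacle I anticipate is the boundary case $b \in \{3,4,5\}$: here \eqref{equ:critical-equation} with $v = 2$ has no positive root, the hypotheses of Lemma~\ref{lem:restricted-digits} degenerate at $s = 0$, and one must replace the entropy estimate by the trivial bound on $M$ and verify the complementary inequality $\alpha \leq 1/2$ using the small-$b$ remark, so that the balance $e_1 = e_2$ is weakened to the strict inequality $e_2 \leq e_1$ and $z_{b,\mathcal{D}}^*$ is realized by the $\textsf{LargeProduct}$ branch alone.
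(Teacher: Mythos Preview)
Your proposal is correct and follows essentially the same approach as the paper: decompose by $\mathcal{D}\in\Omega_b^*$, bound $\textsf{LargeProduct}$ by the prefix count $|\mathcal{D}|^{N-\ell}$ exactly as in the proof of Theorem~\ref{thm:upper-bound}, bound $\textsf{SmallProduct}$ by $N^{O(1)}M^{1/2}$ via the separating-index construction, and then optimize $\alpha$ via~\eqref{equ:critical-equation} with $v=2$. The only cosmetic differences are that the paper bounds the aggregate $\textsf{SmallProduct}$ cost $\sum M^{1/2}$ by Cauchy--Schwarz, $\big(\sum 1\big)^{1/2}\big(\sum M\big)^{1/2}$, whereas you use the equivalent (polynomially many terms)$\,\times\max M^{1/2}$ estimate, and that you make the verification of $e_2\le e_1$ for $b\in\{3,4,5\}$ explicit via the remark following Lemma~\ref{lem:strange-bound}, while the paper leaves this as ``it can be verified''.
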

\begin{proof}
    First, note that the loop in \textsf{ZuckermanCount} has at most $N^b = b^{o(N)}$ steps, which is negligible for our estimate.
    Hence, it suffices to estimate the complexities of $\textsf{LargeProduct}$ and $\textsf{SmallProduct}$.
    Furthermore, in light of Lemma~\ref{lem:Zb-as-union-of-ZbD}, we can compute the complexities of $\textsf{LargeProduct}$ and $\textsf{SmallProduct}$ when they count the Zuckerman numbers in $\mathcal{Z}_{b,\mathcal{D},N}$, for a fixed $\mathcal{D} \in \Omega_b^*$, and then consider the worst-case $\mathcal{D}$.
    
    Let $s > 0$ be a constant to be defined later (depending only on $b$ and $\mathcal{D}$), and let $\alpha,\beta,\gamma$ be defined as in~\eqref{equ:alpha-beta-gamma}.
 
    For $\textsf{LargeProduct}$, the complexity can be estimated exactly as the upper bound~\eqref{equ:upper-bound-ZbD1}, hence it is at most $b^{(\beta + o(1))N}$.
    
    For $\textsf{SmallProduct}$, the number of steps in the outer loop is negligible, and consequently the complexity is at most the cost of generating all the $n_\high$'s and $n_\low$'s, which by construction is at most $N M^{1/2}$.
    Thus, ignoring negligible factors, the complexity of $\textsf{SmallProduct}$ is at most
    \begin{align*}
        \sum_{\prod_{d \in \mathcal{D}} d^{N_d} \,\leq\, b^{\alpha N}}& \left(\frac{N!}{N_1! \cdots N_{b-1}!} \right)^{1/2}
        \leq \left(\sum_{\prod_{d \in \mathcal{D}} d^{N_d} \,\leq\, b^{\alpha N}} 1 \right)^{1/2} \!\! \left(\sum_{\prod_{d \in \mathcal{D}} d^{N_d} \,\leq\, b^{\alpha N}} \frac{N!}{N_1!\cdots N_{b-1}!} \right)^{1/2} \\
        &\leq N^{b/2} \!\! \left(\sum_{\prod_{d \in \mathcal{D}} d^{N_d} \,\leq\, b^{\alpha N}} \frac{N!}{N_1!\cdots N_{b-1}!} \right)^{1/2} < b^{(\gamma/2 + o(1))N} ,
    \end{align*}
    where we employed the Cauchy--Schwarz inequality and Lemma~\ref{lem:restricted-digits} with $a_d = \log d$ for $d \in \mathcal{D}$, $c = \alpha \log b$, $C = W_b - 1$, and $\lambda = -s$ (recalling also Remark~\ref{rmk:converse}).
    
    It remains to choose $s$ so that $\max(\beta, \gamma/2)$ is minimal.
    If $b \geq 6$ then, in light of Lemma~\ref{lem:critical-equation}, we choose $s = s_{b,\mathcal{D},2}$, so that $\beta = \gamma / 2 = z_{b,\mathcal{D}}^*$.
    Hence, we obtain that the complexities of $\textsf{LargeProduct}$ and $\textsf{SmallProduct}$ are both equal to $b^{(z_{b,\mathcal{D}}^* + o(1))N}$.
    Finally, considering the worst-case scenario for $\mathcal{D}$, we get the complexity $\textsf{ZuckermanCount}$ is equal to $b^{(z_b^* + o(1))N}$.
    If $b \in \{3, 4, 5\}$, then by Remark~\ref{rmk:Omega-b-star} we have that $|\Omega_b| = 1$.
    Moreover, it can be verified that $\max(\beta, \gamma/2) = \beta$ and that the optimal choice is taking $s$ arbitrary small, as it is done in the definition of $z_b^*$ for $b \in \{3,4,5\}$.
\end{proof}

\subsection{A further improved algorithm for base 10}\label{sec:algorithm-10}

We now describe an ad hoc algorithm for $b = 10$, which we call $\textsf{ZuckermanCount10}$.
It is defined as $\textsf{ZuckermanCount}$ (with $b=10$) but the subroutine $\textsf{SmallProduct}$ is replaced by $\textsf{SmallProduct10}$, see Figure~\ref{fig:small-product-10}.

The idea on which $\textsf{SmallProduct10}$ is based is the following.
Let $N_\low$, $N_\mi$, and $N_\high$ be nonnegative integers such that $N_\low + N_\mi + N_\high = N$.
We write each $N$-digit integer as
\begin{equation}\label{equ:high-middle-low}
    n = 10^{N_\mi + N_\low} n_\high + 10^{N_\low} n_\mi + n_\low ,
\end{equation}
where $n_\high$, $n_\mi$, and $n_\low$ are $N_\high$-digit, $N_\mi$-digit, and $N_\low$-digit integers, respectively.
Let $v$ be the greatest integer such that $2^v$ divides $p_{10}(n)$.
Suppose that
\begin{equation}\label{equ:N-lo-bound}
    N_\low \leq \min\!\left\{ \frac{\log 2}{\log 10} \, v, \frac{\log 2}{\log 5} \, N_\mi \right\} .
\end{equation}
Let $u := \lceil N_\low \log 10 /\! \log 2 \rceil$, so that $n_\low < 10^{N_\low} \leq 2^u$.
From~\eqref{equ:N-lo-bound}, we get that $N_\low \leq v \log 2 /\! \log 10$ and so $u \leq v$.
Moreover, from~\eqref{equ:N-lo-bound} we have that $N_\low \leq N_\mi \log 2 /\!\log 5$, which implies that $N_\mi + N_\low \geq N_\low \log 10 /\! \log 2$, and so $N_\mi + N_\low \geq u$.

If $n$ is a Zuckerman number, then $2^v$ divides $n$.
Hence, by the previous considerations, we obtain that $2^u$ divides both $n$ and $10^{N_\mi + N_\low}$.
Thus we have that $n_\low \equiv -10^{N_\low} n_\mi \pmod {2^u}$ and $0 \leq n_\low < 2^u$.
Consequently, we get that $n_\low$ is uniquely determined by $n_\mi$, $N_\low$, and $v$.

At this point, the idea is to follow a meet-in-the-middle approach on the possible permutations of the digits of $n_\high$ and $n_\mi$, similarly to how it is done in $\textsf{SmallProduct}$, but with the advantage that we are considering permutations of only $N_\high + N_\mi$ digits, instead of $N$ digits.

More precisely, we put $\alpha := 0.57992$, $\gamma := 0.37938$, and
\begin{equation*}
    N_\low := \left\lfloor \min\!\left\{\frac{\log 2}{\log 10} \, v, \frac{\log 10 \log 2}{\log 8 \log 5} \, \gamma N\right\}\right\rfloor .
\end{equation*}
Then we compute $N_\high^*$ and $N_\mi^*$ to perform a balanced meet-in-the-middle for the permutations of the most significant $N - N_\low$ digits of $n$, and we set
\begin{equation*}
    N_\mi := \left\lfloor \max\!\left\{ \frac{\log 10}{\log 8} \,\gamma N , N_\mi^* \right\} \right\rfloor ,
\end{equation*}
which ensures~\eqref{equ:N-lo-bound}, and $N_\high := N - N_\low - N_\mi$.

\begin{theorem}\label{thm:counting-complexity-10}
    The algorithm~$\textsf{ZuckermanCount10}$ has complexity at most $10^{\gamma N}$, as $N \to +\infty$.
\end{theorem}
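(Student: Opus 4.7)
The plan is to mirror the proof of Theorem~\ref{thm:counting-complexity}, bounding separately the three components of $\textsf{ZuckermanCount10}$. The outer loop runs over tuples $(N_0,\dots,N_9)$ with $\sum_d N_d=N$, contributing only $N^{10}=10^{o(N)}$ iterations, which is negligible.

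For $\textsf{LargeProduct}$, I would reuse the bound from the proof of Theorem~\ref{thm:counting-complexity}. By Remark~\ref{rmk:Omega-b-star}, the only element of $\Omega_{10}^*$ is $\mathcal{D}=\{1,2,3,4,6,7,8,9\}$, so $|\mathcal{D}|=8$, and the cost is $10^{(\beta+o(1))N}$ with $\beta=\frac{\log 8}{\log 10}(1-\alpha)$. The choice $\alpha=0.57992$ gives $\beta=0.37938\ldots=\gamma$, as required.

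The heart of the proof is the complexity analysis of $\textsf{SmallProduct10}$. The crucial new feature is that $n_\low$ is uniquely determined by $n_\mi$ through $n_\low\equiv -10^{N_\low}n_\mi\pmod{2^u}$ and $0\le n_\low<2^u\le 10^{N_\low}$, so the meet-in-the-middle runs only over arrangements of the $N-N_\low$ most significant digits, split between the $N_\high$ digits of $n_\high$ and the $N_\mi$ digits of $n_\mi$. The inequalities~\eqref{equ:N-lo-bound}, enforced by the algorithm through its definitions of $N_\low$ and $N_\mi$, guarantee that $u\le v$ and $N_\mi+N_\low\ge u$, which is exactly what is needed for this three-part decomposition to be consistent with divisibility of $n$ by the power-of-two factor $2^v$ of $P$; the odd factor of $P$ is handled by a residue-indexed hash table, as in $\textsf{SmallProduct}$.

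The main obstacle is bounding the aggregate meet-in-the-middle cost. For each outer-loop choice $(N_1,\dots,N_9)$ with $\prod_d d^{N_d}\le 10^{\alpha N}$, I would bound the cost by $10^{o(N)}\sqrt{M'}$, where $M'$ is the multinomial count of arrangements of the $N-N_\low$ surviving digits. Applying Cauchy--Schwarz over outer-loop choices and then Lemma~\ref{lem:restricted-digits} (with $a_d=\log d$, $c=\alpha\log 10$, and an appropriate $s$) gives an aggregate bound of the same form as in Theorem~\ref{thm:counting-complexity}, except that the removal of the $N_\low$ digits from the multinomial yields an extra factor absorbed by the enforced lower bound $N_\mi\ge\frac{\log 10}{\log 8}\gamma N$: even when the perfectly balanced split $N_\mi^*$ falls below this threshold, this lower bound keeps $\sqrt{M'}$ below $10^{\gamma N+o(N)}$. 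The precise constants $\alpha=0.57992$ and $\gamma=0.37938$ are then chosen exactly so that the $\textsf{LargeProduct}$ and worst-case $\textsf{SmallProduct10}$ bounds coincide, yielding the overall complexity $10^{\gamma N+o(N)}$.
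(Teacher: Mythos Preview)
Your outline has the right architecture, but there are two concrete errors. First, $\Omega_{10}^*$ does \emph{not} have a single element: by Remark~\ref{rmk:Omega-b-star}, since $10$ has two prime factors, $\Omega_{10}^* = \{\mathcal{D}_2,\mathcal{D}_5\}$ with $\mathcal{D}_2=\{1,3,5,7,9\}$ and $\mathcal{D}_5=\{1,2,3,4,6,7,8,9\}$. The paper's proof explicitly treats $\mathcal{D}_5$ as the worst case and remarks that $\mathcal{D}_2$ is easier; you need to say this rather than claim uniqueness. (Your $\textsf{LargeProduct}$ bound $\beta=\frac{\log 8}{\log 10}(1-\alpha)=\gamma$ is correct for $\mathcal{D}_5$, so this error is in the reasoning, not the arithmetic.)

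Second, your handling of the unbalanced case is backwards. The forced lower bound $N_\mi\ge\frac{\log 10}{\log 8}\gamma N$ is there to ensure the second inequality in~\eqref{equ:N-lo-bound}; it is not what ``keeps $\sqrt{M'}$ below $10^{\gamma N}$.'' When this lower bound binds (i.e.\ $N_\mi^*<\frac{\log 10}{\log 8}\gamma N$), the meet-in-the-middle is no longer balanced and its cost is \emph{not} $\sqrt{M'}$ but is dominated by enumerating the arrangements of the digits of $n_\mi$; the paper bounds this crudely by $8^{N_\mi}\le 8^{(\log 10/\log 8)\gamma N}=10^{\gamma N}$. Also, removing the $N_\low$ digits \emph{decreases} the multinomial, so there is no ``extra factor'' to absorb. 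Finally, in the balanced case the paper does not simply invoke Lemma~\ref{lem:restricted-digits}: because $N_\low$ depends on $(N_2,N_4,N_6,N_8)$, the bound reduces to maximizing the expression~\eqref{equ:involved-max} over a convex region, which requires a separate (numerical) Lagrange-multiplier computation (Remark~\ref{rmk:involved-max}) to verify that the maximum is just below $\gamma$. Your sketch skips this step entirely.
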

\begin{proof}
    The proof is similar to that of Theorem~\ref{thm:counting-complexity}, thus we only sketch it.
    First, the number of steps of the outer loops is negligible, and the complexity of $\textsf{LargeProduct}$ is $10^{(1 - \alpha +o(1)) N}$.
    Hence, it remains only to estimate the complexity of $\textsf{SmallProduct10}$.
    
    Note that $\Omega_{10}^* = \{\mathcal{D}_2, \mathcal{D}_5\}$, where $\mathcal{D}_d := \{1,2,\dots,9\} \setminus \{d\}$ for $d \in \{2, 5\}$.
    We consider only when $\textsf{SmallProduct10}$ counts the Zuckerman numbers in $\mathcal{Z}_{10,\mathcal{D}_5,N}$.
    For $\mathcal{Z}_{10,\mathcal{D}_2,N}$ one applies a similar reasoning, and can verify that $\mathcal{D}_5$ is indeed the worst-case scenario.

    If $N_\mi = N_\mi^*$ then, with a reasoning similar to that of the proof of Theorem~\ref{thm:counting-complexity}, the complexity of $\textsf{SmallProduct10}$ is of the order of
    \begin{equation*}
        \left(\sum \frac{(N - N_\low)!}{N_1! \cdots N_9!}\right)^{1/2}
    \end{equation*}
    where the sum is over all integers $N_1, \dots, N_9 \geq 0$ such that $\sum_{d=1}^9 N_d = N - N_\low$, $N_5 = 0$, $\sum_{d=1}^9 (\log d) N_d \leq \alpha \log 10$, while
    \begin{equation*}
        N_\low := \left\lfloor \min\!\left\{\frac{\log 2}{\log 10} (N_2 + 2N_4 + N_6 + 3N_8), \frac{\log 10 \log 2}{\log 8 \log 5} \, \gamma N\right\}\right\rfloor .
    \end{equation*}
    Therefore, using Lemma~\ref{lem:multinomial}, and a strategy similar to that of the proof of Lemma~\ref{lem:restricted-digits}, we get that $\textsf{SmallProduct10}$ has complexity $10^{\gamma N}$, since $\gamma$ is (slightly) greater than the maximum of
    \begin{equation}\label{equ:involved-max}
        -\frac1{2\log 10} \, (1 - y) \sum_{d \,=\, 1}^9 \frac{x_d}{1 - y} \log\!\left(\frac{x_d}{1 - y}\right) ,
    \end{equation}
    where 
    \begin{equation*}
        y := \min\!\left\{\frac{\log 2}{\log 10} (x_2 + 2x_4 + x_6 + 3x_8), \frac{\log 10 \log 2}{\log 8 \log 5} \gamma \right\} ,
    \end{equation*}
    and $x_1, \dots, x_9 \geq 0$ satisfy the constraints $\sum_{d=1}^9 x_d = 1 - y$, $x_5 = 0$, and $\sum_{d=1}^9 (\log d) x_d \leq \alpha \log 10$.
    (Essentially, $x_d = N_d / N$ and $y = N_\low / N$.)
    See Remark~\ref{rmk:involved-max} for more details about the computation of such maximum.
    
    If $N_\mi > N_\mi^*$ (and so $N_\high < N_\mi^*$) then the meet-in-the-middle is unbalanced, with the complexity of computing the permutations of the digits of $n_\mi$ dominating the overall computation.
    However, since $N_\mi \leq  N \gamma \log 10 /\! \log 8$, we have that the number of permutations of the digits of $n_\mi$ is at most $8^{N_\mi} \leq 10^{\gamma N}$.
    Hence, in any case, the complexity of $\textsf{SmallProduct10}$ is at most $10^{(\gamma + o(1)) N}$, as desired.
\end{proof}

\begin{remark}
    The ideas of the algorithm $\textsf{ZuckermanCount10}$ and of the proof of Theorem~\ref{thm:counting-complexity-10} might generalize to other composite values of $b$. 
\end{remark}

\begin{remark}\label{rmk:involved-max}
    Let $\gamma^\prime := \gamma \log 10 \log 2 /\! \log 8 \log 5$.
    To compute the maximum of~\eqref{equ:involved-max} under the aforementioned constraints, one can reason as follows.
    First, assuming that $y = \gamma^\prime$, one can compute the maximum under the constraints $\sum_{d=1}^9 (\log d) x_d \leq \alpha \log 10$ and $x_5 = 0$.
    This can be done by using Lemma~\ref{lem:restricted-digits} after the change of variables $x_d^\prime := x_d / (1-\gamma^\prime)$.
    This maximum is $0.359{\scriptstyle\ldots}$, which is less than $\gamma$.
    Then it remains to compute the maximum of
    \begin{equation*}
        F := -\frac1{2\log 10} \sum_{d \,=\, 1}^9 x_d \log\!\left(\frac{x_d}{S}\right) ,
    \end{equation*}
    where $S := \sum_{d = 1}^9 x_d$, under the constraints $x_5 = 0$,
    \begin{equation*}
        \frac{\log 2}{\log 10} (x_2 + 2x_4 + x_6 + 3x_8) = 1 - S ,
    \end{equation*}
    and $\sum_{d=1}^9 (\log d) x_d \leq \alpha \log 10$.
    This amount to solving the nonlinear system of equations given by the method of Lagrange multipliers, and can be done with arbitrary precision using numerical methods.
    The maximum is $0.3793709{\scriptstyle\ldots}$, which is less than $\gamma$.
    Note that the domain determined by the constraints is convex and $F$ is concave, since the Hessian matrix of $-F$ is positive semidefined.
    Thus each local maximum of $F$ is in fact a global maximum~\cite[p.~192, Theorem~1]{MR3363684}.
\end{remark}

\FloatBarrier

\begin{figure}[h]
    \centering
    \includegraphics{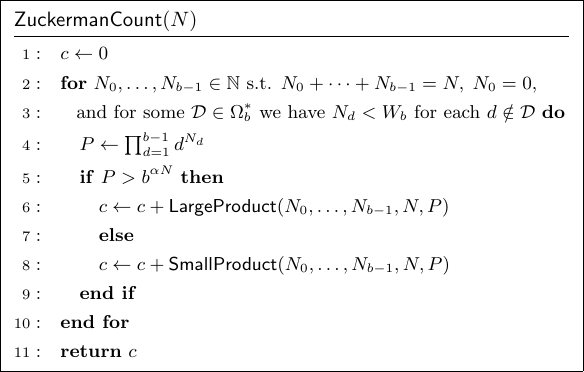}
    \caption{Algorithm to count the number of Zuckerman numbers with $N$ digits.}
    \label{fig:zuckerman-count}
\end{figure}

\begin{figure}[h]
    \centering
    \includegraphics{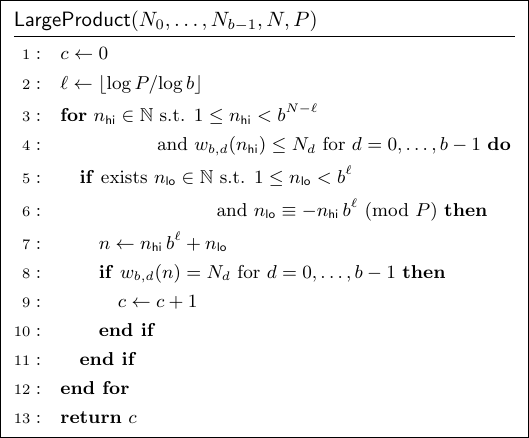}
    \caption{Subroutine $\textsf{LargeProduct}$ of $\textsf{ZuckermanCount}$.}
    \label{fig:large-product}
\end{figure}

\begin{figure}[h]
    \centering
    \includegraphics{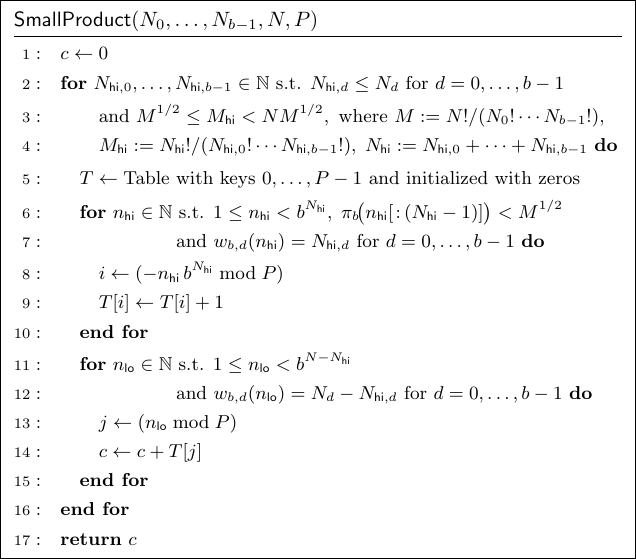}
    \caption{Subroutine $\textsf{SmallProduct}$ of $\textsf{ZuckermanCount}$.}
    \label{fig:small-product}
\end{figure}

\begin{figure}[h]
    \centering
    \includegraphics{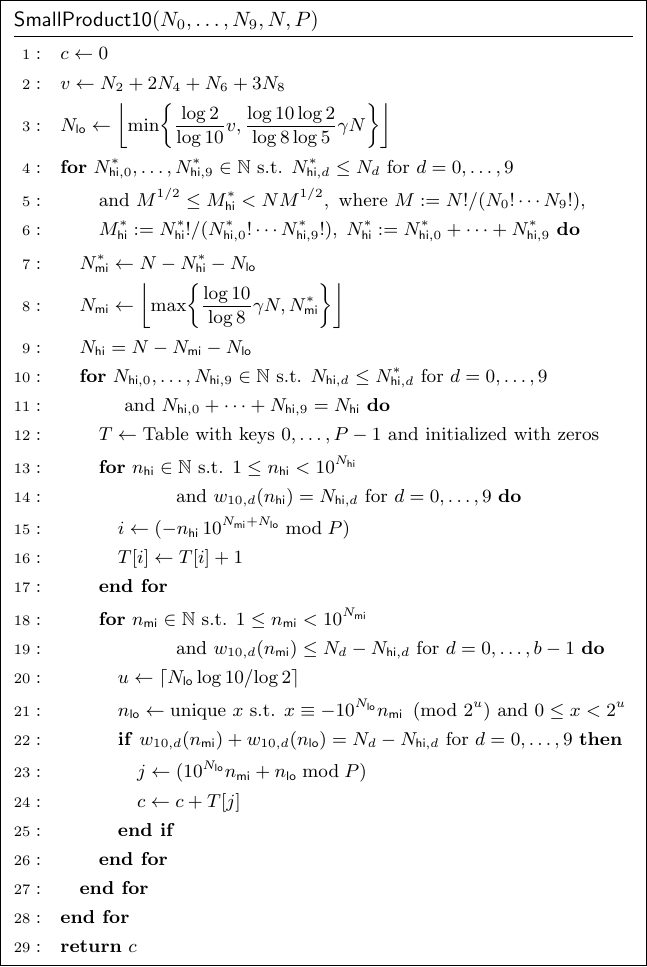}
    \caption{Subroutine $\textsf{SmallProduct10}$ of $\textsf{ZuckermanCount10}$.}
    \label{fig:small-product-10}
\end{figure}

\FloatBarrier
\bibliographystyle{amsplain-no-bysame}
\bibliography{main}

\end{document}